\numberwithin{equation}{section}
\theoremstyle{plain}
\newtheorem{thm}{Theorem}[section]
\newtheorem{cor}[thm]{Corollary}
\newtheorem{lem}[thm]{Lemma}
\newtheorem{prop}[thm]{Proposition}
\newtheorem{defn}[thm]{Definition}
\newtheorem{exm}[thm]{Example}
\newtheorem{rem}[thm]{Remark}
\newcommand{\Hom}{\operatorname{Hom}\nolimits}
\renewcommand{\Im}{\operatorname{Im}\nolimits}
\newcommand{\Ker}{\operatorname{Ker}\nolimits}
\newcommand{\Ext}{\operatorname{Ext}\nolimits}
\newcommand{\Coker}{\operatorname{Coker}\nolimits}
\newcommand{\pd}{\operatorname{pd}\nolimits}
\newcommand{\id}{\operatorname{id}\nolimits}
\newcommand{\pgl}{\operatorname{p.gl.dim}\nolimits}
\newcommand{\igl}{\operatorname{i.gl.dim}\nolimits}
\def\Id{\mathrm{Id}}
\renewcommand{\mod}{\mathsf{mod}\hspace{.01in}}
\newcommand{\Cone}{\operatorname{Cone}\nolimits}
\newcommand{\CoCone}{\operatorname{CoCone}\nolimits}
\newcommand{\B}{\mathcal B}
\newcommand{\U}{\mathcal U}
\newcommand{\V}{\mathcal V}
\newcommand{\A}{\mathcal A}
\newcommand{\s}{\mathcal S}
\newcommand{\T}{\mathcal T}
\newcommand{\D}{\mathcal D}
\newcommand{\C}{\mathcal C}
\newcommand{\E}{\mathcal E}
\newcommand{\EE}{\mathbb E}
\renewcommand{\emph}{\textit}
\renewcommand{\phi}{\varphi}
\newcommand{\add}{\mathsf{add}\hspace{.01in}}
\begin{document}

\title{Gluing $n$-tilting and $n$-cotilting subcategories}\footnote{Yu Liu was supported by the National Natural Science Foundation of China (Grant No. 11901479). Panyue Zhou was supported by the National Natural Science Foundation of China (Grant No. 11901190) and by the Scientific Research Fund of Hunan Provincial Education Department (Grant No. 19B239).}
\author{Yu Liu and Panyue Zhou}
\address{School of Mathematics, Southwest Jiaotong University, 610031 Chengdu, Sichuan, People's Republic of China}
\email{liuyu86@swjtu.edu.cn}
\address{College of Mathematics, Hunan Institute of Science and Technology, 414006 Yueyang, Hunan, People's Republic of China}
\email{panyuezhou@163.com}

\begin{abstract}
 Recently, Wang, Wei and Zhang define the recollement of extriangulated categories, which is a generalization of both recollement of abelian categories and recollement of triangulated categories. For a recollement $(\A ,\B,\C)$ of extriangulated categories, we show that
  $n$-tilting (resp. $n$-cotilting) subcategories in $\A$ and $\C$ can be glued to get $n$-tilting (resp. $n$-cotilting) subcategories in $\B$ under certain conditions. 
\end{abstract}
\keywords{extriangulated categories; recollements; $n$-tilting subcategories; $n$-cotilting subcategories.}
\subjclass[2020]{18G80; 18E10.}
\maketitle

\section{Introduction}
The recollement of triangulated categories was first introduced by Beilinson, Bernstein, and Deligne \cite{BBD}. It is an important tool in algebraic geometry and representation theory.  A fundamental example of a recollement  of abelian categories appeared in the construction of perverse sheaves by MacPherson and Vilonen \cite{MV}, appearing as an inductive step in the construction.

The notion of extriangulated categories was introduced by Nakaoka and Palu \cite{NP} as a common
generalization of exact and triangulated categories.
Wang, Wei and Zhang \cite{WWZ} gave a simultaneous generalization of recollements of abelian categories and triangulated categories, which is called \emph{recollements of extriangulated categories} (see Definition \ref{recollement} for details).
A recollement of triangulated (or, abelian,  extriangulated) categories is a diagram of functors between triangulated (or, abelian,  extriangulated) categories of the following shape , which satisfies certain assumptions.

\begin{equation}\label{1-1}
   \xymatrix{\mathcal{A}\ar[rr]|{i_{*}}&&\ar@/_1pc/[ll]|{i^{*}}\ar@/^1pc/[ll]|{i^{!}}\mathcal{B}
\ar[rr]|{j^{\ast}}&&\ar@/_1pc/[ll]|{j_{!}}\ar@/^1pc/[ll]|{j_{\ast}}\mathcal{C}}
\end{equation}
\vspace{1mm}

For a recollement  $(\A,\B,\C)$ of triangulated categories,
Chen \cite{C} has described how to glue together cotorsion pairs (which are essentially equal to torsion pairs in \cite{IY}) in $\A$ and $\C$ to obtain a cotorsion pair in $\B$, which is a natural generalization of a similar result in \cite{BBD} on gluing together $t$-structures
of $\A$ and $\C$ to obtain a $t$-structure in $\B$.

The notion of cotorsion pair on extriangulated category was introduced in \cite{NP}, which is a generalization of cotorsion pair on triangulated and exact categories.
\begin{defn}\cite[Definition 4.1]{NP}
Let $\U$ and $\V$ be two subcategories of an extriangulated category $\E$. We call $(\U,\V)$ a \emph{cotorsion pair} if it satisfies the following conditions:
\begin{itemize}
\item[(a)] $\EE_\E(\U,\V)=0$.
\smallskip

\item[(b)] For any object $B\in \E$, there are two $\EE_\E$-triangles
\begin{align*}
V_B\rightarrow U_B\rightarrow B{\dashrightarrow},\quad
B\rightarrow V^B\rightarrow U^B{\dashrightarrow}
\end{align*}
satisfying $U_B,U^B\in \U$ and $V_B,V^B\in \V$.
\end{itemize}
\end{defn}

For a recollement $(\mathcal A ,\mathcal B,\mathcal C)$ of extriangulated categories,
Wang, Wei and Zhang \cite{WWZ} provided conditions such that the glued pair with respect to cotorsion pairs in $\mathcal A$ and $\mathcal C$ is still a cotorsion pair in $\mathcal B$. This result recovered a result given by Chen \cite{C} for the recollement of triangulated categories.

We provide a slightly weaker assumption on the functors in (\ref{1-1}) to get glued cotorsion pairs, which fits the recollement of abelian categories better (see Proposition \ref{key0} for details).

\begin{prop}
Let $(\A ,\B,\C)$ be a recollement of extriangulated  categories. Assume that $\B$ has enough projectives and $i^!,j_!$ are exact. Let $(\mathcal{U}_{1},\mathcal{V}_{1})$ and $(\mathcal{U}_{3},\mathcal{V}_{3})$ be cotorsion pairs in $\mathcal{A}$ and $\mathcal{C}$, respectively. Define
$$ \widetilde{\mathcal{U}_2}=\{B\in \mathcal{B}~|~i^{\ast }B\in\mathcal{U}_{1}~\text{and}~j^{\ast}B\in \mathcal{U}_{3}  \};$$
$$\mathcal{V}_2=\{B\in \mathcal{B}~|~i^{!}B\in\mathcal{V}_{1}~\text{and}~j^{\ast}B\in \mathcal{V}_{3}  \}.$$
Then $(\U_2:={^{\bot_1}}\V_2,\mathcal{V}_2)$ is a cotorsion pair in $\mathcal{B}$, where $\U_2\subseteq \add\widetilde{\U_2}$.
We call cotorsion pair $(\U_2,\V_2)$ the \emph{glued cotorsion pair }with respect to $(\U_1,\V_1)$ and $(\U_3,\V_3)$.\end{prop}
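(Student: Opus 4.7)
The plan is to verify the two cotorsion-pair axioms for $(\U_2,\V_2)$ and then deduce $\U_2\subseteq\add\widetilde{\U_2}$ by a splitting argument. The vanishing $\EE_\B(\U_2,\V_2)=0$ is built into the definition $\U_2={}^{\perp_1}\V_2$, so the content is to construct the two approximation $\EE$-triangles for each $B\in\B$. A preliminary observation I would establish first is the inclusion $\widetilde{\U_2}\subseteq\U_2$: given $U\in\widetilde{\U_2}$ and $V\in\V_2$, apply $\EE_\B(-,V)$ to the recollement $\EE$-triangle $j_!j^*U\to U\to i_*i^*U\dashrightarrow$. The adjunctions $j_!\dashv j^*$ and $i_*\dashv i^!$, combined with the exactness of $j^*,i_*$ (standard in any recollement) and of $j_!,i^!$ (by hypothesis), promote to natural isomorphisms $\EE_\B(j_!Y,V)\cong\EE_\C(Y,j^*V)$ and $\EE_\B(i_*X,V)\cong\EE_\A(X,i^!V)$. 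The outer terms of the induced long exact sequence vanish by the cotorsion-pair hypotheses in $\A$ and $\C$, forcing $\EE_\B(U,V)=0$.

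For the second approximation $B\to V^B\to U^B\dashrightarrow$, I would use a two-step pushout construction. First, take a cotorsion approximation $j^*B\to V_3\to U_3\dashrightarrow$ in $\C$ (with $V_3\in\V_3$, $U_3\in\U_3$), push it forward via the exact $j_!$, and splice with the recollement triangle $j_!j^*B\to B\to i_*i^*B\dashrightarrow$ via (ET4) to obtain $P$ fitting in $\EE$-triangles $B\to P\to j_!U_3\dashrightarrow$ and $j_!V_3\to P\to i_*i^*B\dashrightarrow$. Applying the exact $j^*$ to the second gives $j^*P\cong V_3\in\V_3$, but $i^!P$ need not yet lie in $\V_1$. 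Second, take a cotorsion approximation $i^!P\to V_1\to U_1\dashrightarrow$ in $\A$ and splice its $i_*$-pushforward with the recollement $i_*i^!P\to P\to j_*j^*P\dashrightarrow$ via (ET4), producing $Q$ with $\EE$-triangles $P\to Q\to i_*U_1\dashrightarrow$ and $i_*V_1\to Q\to j_*j^*P\dashrightarrow$; the identities $j^*i_*=0$, $i^!j_*=0$, $i^!i_*\cong\Id$, $j^*j_*\cong\Id$, together with the exactness of $i^!,j^*$, now give $j^*Q\cong V_3\in\V_3$ and $i^!Q\cong V_1\in\V_1$, so $Q\in\V_2$. Composing the inflations $B\to P\to Q$ and invoking (ET4) once more, I obtain an $\EE$-triangle $B\to Q\to C\dashrightarrow$ in which $C$ sits in $j_!U_3\to C\to i_*U_1\dashrightarrow$; a $\Hom$-adjunction computation (as in the preliminary step) then shows $j^*C\cong U_3\in\U_3$ and $i^*C\cong U_1\in\U_1$, so $C\in\widetilde{\U_2}\subseteq\U_2$.

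The first approximation $V_B\to U_B\to B\dashrightarrow$ is obtained by the dual pullback construction using cotorsion approximations in the other direction, the other recollement triangle, and the corresponding $(\text{ET4})^{\op}$ splicings. Once both approximation triangles are in place, the inclusion $\U_2\subseteq\add\widetilde{\U_2}$ follows by splitting: for $U\in\U_2$, the triangle $V\to\widetilde{U}\to U\dashrightarrow$ produced above has $\widetilde{U}\in\widetilde{\U_2}$ and $V\in\V_2$, so $\EE_\B(U,V)=0$ forces it to split, realizing $U$ as a direct summand of $\widetilde{U}$.

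The main obstacle is the organization of the two-step splicing: a naive extension of $i_*V_1$ by $j_!V_3$ does not land in $\V_2$ because $i^!j_!$ need not vanish, so the final fiber must instead be assembled so that the contribution from $\C$ enters through $j_*j^*P$, on which $i^!$ does vanish. This is precisely what the second (ET4) application achieves, and it is where the exactness hypothesis on $i^!$ is indispensable: it ensures that $i^!$ applied to the $\EE$-triangles in $\B$ produces $\EE$-triangles in $\A$ on which the extension-closure of $\V_1$ can be invoked.
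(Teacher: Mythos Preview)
Your argument has a genuine gap: you repeatedly invoke an $\EE_\B$-triangle $j_!j^*X\to X\to i_*i^*X\dashrightarrow$, but under the stated hypotheses this triangle need not exist. From (R5) one only has the right exact sequence $i_*A'\to j_!j^*X\to X\to i_*i^*X$; promoting it to an honest $\EE$-triangle requires $i^*$ to be exact (Proposition~\ref{key2}(2)), whereas here only $i^!$ and $j_!$ are assumed exact. This undermines your preliminary inclusion $\widetilde{\U_2}\subseteq\U_2$ (and indeed the Remark following the proposition says that exactness of $i^*$ is what yields $\widetilde{\U_2}=\U_2$) as well as the first splicing in your second-approximation construction. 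The second approximation can be repaired: one may still obtain $B\to P\to j_!U_3\dashrightarrow$ with $j^*P\cong V_3$ by pushing the $j_!$-image of the $\C$-approximation along the counit $j_!j^*B\to B$, and the eventual cofiber $C$ in $B\to Q\to C$ lies in $\U_2$ not via $\widetilde{\U_2}$ but directly from its extension structure $j_!U_3\to C\to i_*U_1$, since $i_*\U_1$ and $j_!\U_3$ both lie in ${}^{\perp_1}\V_2$ by Proposition~\ref{key1}(7),(7$'$). This is exactly how the paper argues for the second approximation.

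The more serious problem is the first approximation $V_B\to U_B\to B\dashrightarrow$, which you dismiss as ``the dual pullback construction''. The hypotheses are not self-dual: dualising your two-step scheme eventually needs the unit triangle $j_!j^*P\to P\to i_*i^*P\dashrightarrow$ (equivalently, exactness of $i^*$), which is unavailable. The paper avoids this entirely via a syzygy trick: using enough projectives, write $\Omega_\B X\to P_X\to X\dashrightarrow$, apply the already-obtained second approximation to $\Omega_\B X$ to get $\Omega_\B X\to V_2\to U_2\dashrightarrow$, and push out to produce $V_2\to U_2'\to X\dashrightarrow$ with $U_2'$ an extension of $U_2$ by $P_X$, hence in $\U_2$. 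This is the key device you are missing. Finally, the inclusion $\U_2\subseteq\add\widetilde{\U_2}$ is then obtained from a separate triangle $V\to U\to U_2'\dashrightarrow$ with $U\in\widetilde{\U_2}$ (not $\U_2$) and $V\in\V_2$, which splits; your version of this step presupposes the dual construction you have not actually carried out.
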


Tilting module was introduced by Brenner-Butler \cite{BB} and
Happel-Ringel \cite{HR}. Ma and Zhao \cite{MZ} gave a way of constructing a tilting module by gluing together two tilting modules in a recollement of module categories, this result also glued the correspondence torsion pairs.  The notion of $n$-tilting module was first introduced by
Miyashita \cite{M}, this coincides with the definition of tilting module when $n=1$.  An analog concept of $n$-tilting module is introduced in \cite{LZZZ}, which is called $n$-titling subcategory (see Definition \ref{deftil} for details). Dually $n$-cotilting subcategory can be defined. Every $n$-titling (resp. $n$-cotilting) subcategory admits a cotorsion pair, we call such cotorsion pair a tilting (resp. cotilting) cotorsion pair. We describe how to glue together two $n$-tilting (resp. $n$-cotilting) subcategories in $\A$ and $\C$ to obtain an  $n$-tilting (resp.  $n$-cotilting) subcategory in $\B$ for a recollement of extriangulated categories, by gluing the correspondent tilting (resp. cotiltng) cotorsion pairs (see Theorem \ref{main5} and Proposition \ref{main6} for details).

\
\begin{thm}
Assume that $(\mathcal A,\B,\C)$ is a recollement of extriangulated categories, where $\B$ has enough projectives and enough injectives, functors $i^!,j_!$ are exact.
\begin{itemize}
\item[(1)] Let $(\U_1,\V_1)$ and $(\U_3,\V_3)$ be cotilting cotorsion pairs in $\mathcal A$ and $\mathcal C$ respectively. Then the glued cotorsion pair $(\U_2,\V_2)$ in $\B$ is a cotilting cotorsion pair.
\item[(2)] Let $(\U_1,\V_1)$ and $(\U_3,\V_3)$ be tilting cotorsion pairs in $\mathcal A$ and $\mathcal C$ respectively. Assume $\mathcal A$ has finite projective global dimension, then the glued cotorsion pair $(\U_2,\V_2)$ in $\B$ is a tilting cotorsion pair.
\end{itemize}
\end{thm}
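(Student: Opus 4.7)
The plan is to build on the previous proposition, which already gives us that $(\U_2,\V_2)$ is a cotorsion pair in $\B$; what remains is to verify the extra finiteness and Ext-vanishing conditions required by Definition \ref{deftil} (and its dual). The key guiding principle is that the recollement provides, for every $B\in\B$, two $\EE$-triangles of the form
\[
j_!j^{*}B\to B\to i_*i^{*}B\dashrightarrow,\qquad i_*i^{!}B\to B\to j_*j^{*}B\dashrightarrow,
\]
so any homological invariant of $B$ can be controlled by the corresponding invariants of $i^{*}B, i^{!}B\in\A$ and $j^{*}B\in\C$, together with the exactness/preservation properties of the six functors.

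For part (1), I would fix $V\in \V_2$ and show $V$ admits a finite coresolution by objects of the form required by a cotilting subcategory. Since $i^{!}V\in\V_1$ and $j^{*}V\in\V_3$ have bounded injective (co-)resolutions by cotilting data in $\A$ and $\C$, the plan is to transport these resolutions to $\B$ via $i_*$ and $j_*$ and then splice them using the second recollement triangle above. One needs that $i_*$ and $j_*$ are exact (a standard consequence of the recollement axioms; exactness of $j_!$ and $i^!$ forces the adjoints on the correct sides to be compatible with $\EE$-triangles). The Ext-vanishing $\EE^{i}(\V_2,\V_2)=0$ for $1\le i\le n$ follows by applying the adjunctions $(i^{*},i_{*})$, $(j_!,j^{*})$, $(j^{*},j_{*})$ to reduce Ext-computations in $\B$ to Ext-computations in $\A$ or $\C$, where the cotilting hypothesis gives the vanishing. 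I would also need to check that every injective of $\B$ lies in $\thick(\V_2)$ of the appropriate length; this is done by resolving injectives via the triangle $i_*i^{!}I\to I\to j_*j^{*}I\dashrightarrow$ and using that $i^{!}$ and $j^{*}$ send injectives to objects that can be resolved within $\V_1$ and $\V_3$.

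Part (2) is the dual strategy with the first recollement triangle, but it contains the real obstacle. Given $U\in\U_2$, one would like to build a bounded projective resolution from those of $j^{*}U\in\U_3$ (transported to $\B$ by the exact functor $j_!$, which sends projectives to projectives via the adjunction $(j_!,j^{*})$) and of $i^{*}U\in\U_1$ (transported by $i_*$). The asymmetry is that $i_*$ is typically \emph{not} exact and does \emph{not} preserve projectives, so a finite projective resolution of $i^{*}U$ in $\A$ does not immediately yield a finite projective resolution of $i_*i^{*}U$ in $\B$. This is exactly the role of the extra assumption $\pgl\A<\infty$: it will let us resolve every $P\in\proj\A$ by a bounded complex whose terms are projective in $\B$ (e.g.\ using that $\B$ has enough projectives and that the higher derived pieces of $i_*$ live in $\A$, which has finite projective global dimension), and then splice with the resolution of $j_!j^{*}U$ via the triangle to obtain a uniform bound on $\pd_\B U$. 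The Ext-vanishing and the containment of $\proj\B$ in $\thick(\U_2)$ are then checked by the same adjunction game as in part (1).

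The main obstacle, then, is step (2): controlling $\pd_\B (i_*X)$ for $X\in\U_1$. Once this is handled by combining $\pgl\A<\infty$ with the recollement triangle and a careful induction on $\pd_\A X$, the rest of the argument reduces to routine adjunction-and-splicing bookkeeping, and the cotilting case (1) goes through more easily because exactness of $i_*$ (respectively $j_*$) on the injective side is already present from the standing hypotheses.
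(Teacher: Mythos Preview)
Your high-level instinct---bound $\pd_\B\U_2$ (resp.\ $\id_\B\V_2$) by decomposing each object via the recollement and transporting resolutions along the six functors---matches the paper, and together with Proposition~\ref{p1} and Proposition~\ref{key3} this is all that is needed. But two of your key claims are factually wrong, and they lead you to misidentify the obstacle in part~(2).

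First, you assert that both $\EE$-triangles
\[
j_!j^{*}B\to B\to i_*i^{*}B\dashrightarrow\quad\text{and}\quad i_*i^{!}B\to B\to j_*j^{*}B\dashrightarrow
\]
exist. Under the standing hypotheses only the second one does (Proposition~\ref{key2}(1), from $i^{!}$ exact). The first is only the \emph{right exact} sequence (R5), namely two $\EE$-triangles $i_*A\to j_!j^{*}B\to X\dashrightarrow$ and $X\to B\to i_*i^{*}B\dashrightarrow$ with an auxiliary $A\in\A$ about which you know nothing beyond $A\in\A$. Second, you say ``$i_*$ is typically not exact and does not preserve projectives.'' In a recollement $i_*$ is \emph{always} exact by definition, and since $i^{!}$ is exact here, $i_*$ \emph{does} preserve projectives (Proposition~\ref{key1}(4)). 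So applying $i_*$ to a finite projective resolution in $\A$ already gives one in $\B$; there is nothing to repair, and there are no ``higher derived pieces of $i_*$'' to worry about.

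The real reason for the hypothesis $\pgl\A<\infty$ in part~(2) is the extra term $A$ in (R5): for $U\in\U_2$ (after passing to some $\widetilde U\in\widetilde{\U_2}$, since $\U_2\subseteq\add\widetilde{\U_2}$) one has $i^{*}\widetilde U\in\U_1$ and $j^{*}\widetilde U\in\U_3$, hence $\pd_\B(i_*i^{*}\widetilde U)$ and $\pd_\B(j_!j^{*}\widetilde U)$ are bounded; but nothing controls $\pd_\A A$ unless $\pgl\A<\infty$. Dually, in part~(1) the genuine $\EE$-triangle $i_*i^{!}V\to V\to j_*j^{*}V\dashrightarrow$ is available, so no extra hypothesis is needed; the only subtlety (which you do not address) is that $i_*$ need not preserve \emph{injectives}. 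The paper handles this by observing that injectives of $\A$ lie in $\add(i^{!}\I_\B)$ and that for $I\in\I_\B$ the triangle $i_*i^{!}I\to I\to j_*j^{*}I\dashrightarrow$ forces $\id_\B(i_*i^{!}I)\le 1$, whence $\id_\B(i_*i^{!}V)\le n_1+1$.

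Finally, you need not verify the axioms of Definition~\ref{deftil} directly: by Proposition~\ref{key3} (and its dual) it suffices to show the glued hereditary pair has $\pd_\B\U_2<\infty$ (resp.\ $\id_\B\V_2<\infty$), which is exactly what the bounds above give.
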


We also discuss how to glue $n$-tilting objects on abelian categories (see Theorem \ref{main7} for details).

\begin{thm}
Assume $(\mathcal A,\B,\C)$ is a recollement of abelian categories, where $\B$ has enough projectives and enough injectives, functors $i^!,j_!$ are exact. Let $(\U_1,\V_1)$ and $(\U_3,\V_3)$ be tilting cotorsion pairs in $\mathcal A$ and $\mathcal C$ respectively. Assume $\U_i\cap\V_i=\add T_i, i=1,3$. Then the glued cotorsion pair $(\U_2,\V_2)$ in $\B$ is a tilting cotorsion pair such that $\U_2\cap\V_2$ is the additive closure of an $n$-tilting object.
\end{thm}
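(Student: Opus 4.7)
The plan is to first apply Theorem~\ref{main5}(2) to obtain that $(\U_2,\V_2)$ is a tilting cotorsion pair in $\B$, and then exhibit a single object $T_2\in\B$ with $\add T_2=\U_2\cap\V_2$. The hypothesis that $\A$ has finite projective global dimension (required by Theorem~\ref{main5}(2)) follows in our setting from $\U_1\cap\V_1=\add T_1$: $T_1$ is then an $n_1$-tilting object, and the associated resolutions bound $\pgl\A$ in the abelian setting with enough projectives.

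The candidate is $T_2:=i_*T_1\oplus W$. That $i_*T_1\in\U_2\cap\V_2$ follows from $i^*i_*=i^!i_*=\Id_\A$, $j^*i_*=0$ and the Ext-adjunction $\EE_\B(i_*T_1,V)\cong\EE_\A(T_1,i^!V)=0$ for $V\in\V_2$, which is valid because exactness of $i^!$ forces $i_*$ to preserve projectives through $i_*\dashv i^!$. To build $W$, exactness of $j_!$ analogously gives $\EE_\B(j_!T_3,V)\cong\EE_\C(T_3,j^*V)=0$ for $V\in\V_2$, hence $j_!T_3\in\U_2$. Taking the cotorsion-pair approximation
\[ j_!T_3\rightarrowtail W\twoheadrightarrow U,\qquad W\in\V_2,\ U\in\U_2, \]
and applying $\EE_\B(-,V')$ for $V'\in\V_2$ to this conflation makes both outer Ext-terms vanish, so $\EE_\B(W,V')=0$ and $W\in\U_2\cap\V_2$.

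For the inclusion $\U_2\cap\V_2\subseteq\add T_2$, take $Q\in\U_2\cap\V_2$. The recollement provides two canonical exact sequences
\[ j_!j^*Q\to Q\to i_*i^*Q\to 0\qquad\text{and}\qquad 0\to i_*i^!Q\to Q\to j_*j^*Q. \]
From $j^*Q\in\U_3\cap\V_3=\add T_3$ one has $j_!j^*Q\in\add(j_!T_3)$, which links to $\add W$ via the defining conflation. The Ext-adjunctions $\EE_\A(i^*Q,V_1)\cong\EE_\B(Q,i_*V_1)=0$ and $\EE_\A(U_1,i^!Q)\cong\EE_\B(i_*U_1,Q)=0$ give $i^*Q\in\U_1$ and $i^!Q\in\V_1$. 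Combining both canonical sequences then upgrades these to $i^*Q,i^!Q\in\U_1\cap\V_1=\add T_1$, whence $i_*i^*Q\in\add(i_*T_1)$. Splicing the first canonical sequence with the conflation defining $W$ and splitting the resulting extension realizes $Q$ as a summand of some $T_2^n$.

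The principal obstacle is upgrading $i^*Q\in\U_1$ to $i^*Q\in\U_1\cap\V_1$: while $\U_1$-membership is formal, extracting $\V_1$-membership requires combining the two canonical recollement sequences and exploiting the simultaneous constraints on $i^*Q$ and $i^!Q$. The subsequent splitting is handled through the vanishing $\EE_\B(i_*A,j_!C)=0$ for $A\in\add T_1$, $C\in\add T_3$, which reduces via $i_*\dashv i^!$ and exactness of $i^!$ to a verification about $i^!j_!C$.
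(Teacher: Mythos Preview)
Your very first step fails: the claim that $\pgl\A<\infty$ follows from the existence of an $n_1$-tilting object $T_1$ is false. For any artin algebra $\Lambda$ of infinite global dimension (e.g.\ $\Lambda=k[x]/(x^2)$), the regular module $\Lambda$ is a $0$-tilting object, yet $\pgl(\mod\Lambda)=\infty$. The conditions (P1)--(P3) bound $\pd T_1$ and give coresolutions of projectives by $\add T_1$, but they say nothing about $\pd$ of arbitrary objects. Consequently you cannot invoke the earlier result (Proposition~\ref{main6}) to conclude that $(\U_2,\V_2)$ is tilting. Indeed, the entire point of Theorem~\ref{main7} in the paper is precisely to remove this global-dimension hypothesis in the abelian setting.

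The paper proceeds in the opposite order from what you attempt. Rather than first establishing that $(\U_2,\V_2)$ is tilting and then identifying $\T_2$, it builds the candidate object $T_2':=i_*T_1\oplus K_{T_3}$ (your $W$ is essentially $K_{T_3}$), verifies directly that $T_2'$ satisfies (P1), (P2) and (P3) of Definition~\ref{deftil} --- the bulk of the work being (P3), where one must produce explicit $\add T_2'$-coresolutions of each projective $P\in\mathcal P_\B$ --- and then shows that the cotorsion pair $((\T_2')^\vee,(\T_2')^\bot)$ determined by $T_2'$ coincides with $(\U_2,\V_2)$. This last step is what forces $\pd_\B\U_2<\infty$ and hence that $(\U_2,\V_2)$ is tilting, without any appeal to $\pgl\A$.

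Even setting aside the global-dimension gap, your proposed inclusion $\U_2\cap\V_2\subseteq\add T_2$ is not argued. You correctly note that for $Q\in\T_2$ one gets $i^*Q\in\U_1$ and $i^!Q\in\V_1$, but you offer no mechanism to upgrade $i^*Q$ to $\V_1$; ``combining both canonical sequences'' does not do this, since the two sequences involve $i^*Q$ and $i^!Q$ separately and there is no general comparison between them. The paper sidesteps this entirely by the cotorsion-pair identification above, never analyzing an arbitrary $Q\in\T_2$ directly.
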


This article is organized as follows. In Section 2, we first recall the definition and some basic properties of recollements of extriangulated categories, then we show how to glue cotorsion pairs under certain assumptions.
In Section 3, we  recall the definition of $n$-titling (resp. $n$-cotilting) subcategory in extriangulated categories and show some basic properties that we need.
In Section 4, we glue $n$-tilting and $n$-cotilting subcategories in a recollement of extriangulated categories.
In Section 5, we give some examples of our results.

\section{Preliminaries}
The definition and basic properties of extriangulated categories can be find 
in \cite[Section 2, 3]{NP}. In this article,  let $k$ be a field, $(\mathcal E,\EE_{\E},\mathfrak{s}_{\E})$ be an extriangulated category. Denote the subcategory of projective (resp. injective) objects by $\mathcal P_{\E}$ (resp. $\mathcal I_\E$).

When we say that $\C$ is a subcategory of $\E$, we always assume that $\C$ is full, closed under isomorphisms, direct sums and direct summands. Note that we do not assume any subcategory we construct has such property.

In this paper, we assume that $\E$ satisfies Condition (WIC)(\cite[Condition 5.8]{NP}):

\begin{itemize}
\item If we have a deflation $h: A\xrightarrow{~f~} B\xrightarrow{~g~} C$, then $g$ is also a deflation.
\item If we have an inflation $h: A\xrightarrow{~f~} B\xrightarrow{~g~} C$, then $f$ is also an inflation.
\end{itemize}

Note that any triangulated category and Krull-Schmidt exact category satisfies Condition {\rm (WIC)}.

\subsection{Recollement of extriangulated categories}
We recall the definition of recollement of extriangulated categories from \cite{WWZ}. We only state the settings that we need, for details, one can see \cite[Section 3]{WWZ}.


\begin{defn}\label{recollement}{\rm \cite[Definition 3.1]{WWZ}}
Let $\mathcal{A}$, $\mathcal{B}$ and $\mathcal{C}$ be three extriangulated categories. A \emph{recollement} of $\mathcal{B}$ relative to
$\mathcal{A}$ and $\mathcal{C}$, denoted by ($\mathcal{A}$, $\mathcal{B}$, $\mathcal{C}$), is a diagram
$$
  \xymatrix{\mathcal{A}\ar[rr]|{i_{*}}&&\ar@/_1pc/[ll]|{i^{*}}\ar@/^1pc/[ll]|{i^{!}}\mathcal{B}
\ar[rr]|{j^{\ast}}&&\ar@/_1pc/[ll]|{j_{!}}\ar@/^1pc/[ll]|{j_{\ast}}\mathcal{C}}
$$
given by two exact functors $i_{*},j^{\ast}$, two right exact functors $i^{\ast}$, $j_!$ and two left exact functors $i^{!}$, $j_\ast$, which satisfies the following conditions:
\begin{itemize}
  \item [(R1)] $(i^{*}, i_{\ast}, i^{!})$ and $(j_!, j^\ast, j_\ast)$ are adjoint triples.
  \item [(R2)] $\Im i_{\ast}=\Ker j^{\ast}$.
  \item [(R3)] $i_\ast$, $j_!$ and $j_\ast$ are fully faithful.
  \item [(R4)] For each $X\in\mathcal{B}$, there exists a left exact $\mathbb{E}_{\mathcal B}$-triangle sequence
  $$
  i_\ast i^! X\xrightarrow{\theta_X} X\xrightarrow{\vartheta_X} j_\ast j^\ast X\rightarrow i_\ast A
$$
  with $A\in \mathcal{A}$, where $\theta_X$ and  $\vartheta_X$ are given by the adjunction morphisms.
  \item [(R5)] For each $X\in\mathcal{B}$, there exists a right exact $\mathbb{E}_{\mathcal B}$-triangle sequence
 $$
  i_\ast A'\rightarrow j_! j^\ast X\xrightarrow{\upsilon_X} X\xrightarrow{\nu_X} i_\ast i^\ast X
$$
 with $A'\in \mathcal{A}$, where $\upsilon_X$ and $\nu_X$ are given by the adjunction morphisms.
\end{itemize}
\end{defn}

We omit the definitions of left, right exact functors and left, right exact $\EE_\E$-triangle, since they will not be used in the argument. The following remarks are useful.

\begin{rem}{\rm \cite[Proposition 3.3]{WWZ}}
An additive covariant functor $F:\mathcal A\to \B$ is exact if and only if $F$ is both left exact and right exact. Recall that $F$ is exact if for any $\EE_{\mathcal A}$-triangle $A\xrightarrow{f} B\xrightarrow{g} C\dashrightarrow$, the sequence $F(A)\xrightarrow{F(f)} F(B)\xrightarrow{F(g)} F(C)$ is an $\EE_\B$-triangle.
\end{rem}

\begin{rem}\label{77}{\rm (1)} If $\mathcal{A}$, $\mathcal{B}$ and $\mathcal{C}$ are abelian categories, then Definition \ref{recollement} coincides with the definition of recollement of abelian categories (cf. \cite{FP,P,MH}).

{\rm (2)} If $\mathcal{A}$, $\mathcal{B}$ and $\mathcal{C}$ are triangulated categories, then Definition \ref{recollement} coincides with the definition of recollement of triangulated categories (cf. \cite{BBD}).

{\rm (3)} There exist examples of recollement of an extriangulated category in which one of the categories involved is neither abelian nor triangulated, see \cite{WWZ}.
\end{rem}

We collect some properties of a recollement of extriangulated categories, which will be used in the sequel.
\begin{prop}\label{key1}\rm{\cite[Proposition 3.3]{WWZ}} Let ($\mathcal{A}$, $\mathcal{B}$, $\mathcal{C}$) be a recollement of extriangulated categories.

$(1)$ All the natural transformations
$$i^{\ast}i_{\ast}\Rightarrow\Id_{\A},~\Id_{\A}\Rightarrow i^{!}i_{\ast},~\Id_{\C}\Rightarrow j^{\ast}j_{!},~j^{\ast}j_{\ast}\Rightarrow\Id_{\C}$$
are natural isomorphisms.

$(2)$ $i^{\ast}j_!=0$ and $i^{!}j_\ast=0$.

$(3)$ $i^{\ast}$ preserves projective objects and $i^{!}$ preserves injective objects.

$(3')$ $j_{!}$ preserves projective objects and $j_{\ast}$ preserves injective objects.

$(4)$ If $i^{!}$ (resp. $j_{\ast}$) is  exact, then $i_{\ast}$ (resp. $j^{\ast}$) preserves projective objects.

$(4')$ If $i^{\ast}$ (resp. $j_{!}$) is  exact, then $i_{\ast}$ (resp. $j^{\ast}$) preserves injective objects.

$(5)$ If $\mathcal{B}$ has enough projectives, then $\mathcal{A}$ has enough projectives $\add(i^*(\mathcal P_\B))$; if $\mathcal{B}$ has enough injectives, then $\mathcal{A}$ has enough injectives $\add(i^{!}(\mathcal I_\B))$.

$(6)$  If $\mathcal{B}$ has enough projectives and $j_{\ast}$ is exact, then $\mathcal{C}$ has enough projectives $\add(j^*(\mathcal P_\B))$; if $\mathcal{B}$ has enough injectives and $j_{!}$ is exact, then $\mathcal{C}$ has enough injectives $\add(j^*(\mathcal I_\B))$.

$(7)$ If $\mathcal{B}$ has enough projectives and $i^{!}$ is  exact, then $\mathbb{E}_{\mathcal{B}}(i_{\ast}X,Y)\cong\mathbb{E}_{\mathcal{A}}(X,i^{!}Y)$ for any $X\in\mathcal{A}$ and $Y\in\mathcal{B}$.

$(7')$  If $\mathcal{C}$ has enough projectives and $j_{!}$ is  exact, then $\mathbb{E}_{\mathcal{B}}(j_{!}Z,Y)\cong\mathbb{E}_{\mathcal{C}}(Z,j^{\ast}Y)$ for any $Y\in\mathcal{B}$ and $Z\in\mathcal{C}$.

$(8)$  If $i^{\ast}$ is exact, then $j_{!}$ is  exact.

$(8')$ If $i^{!}$ is exact, then $j_{\ast}$ is exact.

\end{prop}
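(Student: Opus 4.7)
The plan is to dispatch the eight-plus items in logical groups, each using a common ingredient from adjunction theory or the recollement axioms (R1)--(R5).

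Item (1) is a purely formal consequence of (R1) and (R3): in any adjunction $F \dashv G$, the unit $\Id \Rightarrow GF$ (resp.\ counit $FG \Rightarrow \Id$) is a natural isomorphism precisely when $F$ (resp.\ $G$) is fully faithful; applying this to the three fully faithful functors $i_*$, $j_!$, $j_*$ from (R3) yields all four isomorphisms. For (2), I would combine (R2) with the Yoneda lemma: $\Hom_{\A}(i^{*}j_{!}C, A) \cong \Hom_{\B}(j_{!}C, i_{*}A) \cong \Hom_{\C}(C, j^{*}i_{*}A) = 0$ because $i_*A \in \Im i_* = \Ker j^*$, and dually for $i^{!}j_{*}=0$.

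For (3), (3$'$), (4), (4$'$), the guiding principle is: in an adjunction $F \dashv G$ between extriangulated categories, if $G$ is exact (hence sends deflations to deflations) then $F$ preserves projectives, since the natural isomorphism $\Hom(FP, -) \cong \Hom(P, G-)$ takes deflations to surjections whenever $P$ is projective; the dual statement with $F$ exact shows $G$ preserves injectives. Feeding this into each of the adjunctions $(i^{*}, i_{*}, i^{!})$ and $(j_{!}, j^{*}, j_{*})$ yields all four assertions. Items (5) and (6) then follow by transporting resolutions: for $A \in \A$ choose a deflation $P \twoheadrightarrow i_*A$ in $\B$ with $P$ projective, apply the right exact $i^*$, and use $i^{*}i_{*} \cong \Id_{\A}$ from (1) to get a deflation $i^{*}P \twoheadrightarrow A$ whose source is projective by (3); the injective case is dual, and (6) is the same argument with $j^{*}$ in place of $i^{*}$, invoking (4) or (4$'$) to guarantee that $j^{*}$ preserves projectives/injectives under the respective exactness hypothesis on $j_{*}$ or $j_{!}$.

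The main technical steps are (7), (7$'$), which compute $\EE$-groups, and (8), (8$'$), which promote exactness of one adjoint to exactness of its neighbour. For (7), using (5), pick an $\EE$-triangle $K \to P \to X \dashrightarrow$ in $\A$ with $P$ projective and apply the exact functor $i_{*}$ to obtain an $\EE$-triangle $i_{*}K \to i_{*}P \to i_{*}X \dashrightarrow$ in $\B$; under the hypothesis that $i^{!}$ is exact, (4) ensures $i_{*}P$ is projective in $\B$. The long exact sequences of $\Hom_{\A}(-, i^{!}Y)$ and $\Hom_{\B}(-, Y)$ both kill the $P$-slot at the $\EE$-level, and the adjunction $i_{*} \dashv i^{!}$ identifies the $\Hom$-terms, so the cokernel presentation of $\EE_{\A}(X, i^{!}Y)$ matches that of $\EE_{\B}(i_{*}X, Y)$; (7$'$) is dual, using (3$'$) and $j_{!}$ exact. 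For (8), given an $\EE$-triangle $C_{1}\to C_{2}\to C_{3}\dashrightarrow$ in $\C$, the sequence (R5) and the natural isomorphisms from (1) realise $j_{!}$ on this triangle inside $\B$, and (7$'$) combined with the assumed exactness of $i^{*}$ lets one propagate inflations and deflations through the recollement to verify that the image is again an $\EE$-triangle.

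The step I expect to be the main obstacle is the verification in (7) and (8) that the adjunction-induced isomorphisms interact correctly with the conflation structure and Condition (WIC); concretely, one must check that the connecting maps in the two long exact $\Hom/\EE$-sequences are genuinely transported by adjunction, and that the candidate $\EE$-triangle produced by $j_{!}$ satisfies the axioms rather than merely a three-term diagram with the right $\Hom$-behaviour.
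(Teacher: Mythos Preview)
The paper does not actually prove this proposition: it is quoted verbatim from \cite[Proposition 3.3]{WWZ} and no proof is given in the present paper. So there is nothing to compare your sketch against here; your outline stands on its own as an independent justification of a cited result.

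That said, most of your sketch is sound and follows the standard pattern one would expect in \cite{WWZ}: items (1)--(6) are the usual formal consequences of adjunction plus fully faithfulness, and your treatment of (7)/(7$'$) via a projective presentation and comparison of long exact sequences is the natural argument. One point to flag: your plan for (8) appeals to (7$'$), but (7$'$) has $j_{!}$ exact as a \emph{hypothesis}, which is exactly the conclusion you are trying to establish in (8). If you meant instead the $i^{*}$-analogue of (7) (namely $\EE_{\B}(Y,i_{*}X)\cong\EE_{\A}(i^{*}Y,X)$ when $i^{*}$ is exact), say so explicitly; otherwise the argument is circular. A cleaner route for (8) is to use the right exact $\EE_{\B}$-triangle sequence of (R5) together with the exactness of $i^{*}$ and the isomorphism $j^{*}j_{!}\cong\Id_{\C}$ from (1) to identify the image under $j_{!}$ of an $\EE_{\C}$-triangle with a genuine $\EE_{\B}$-triangle, bypassing (7$'$) entirely.
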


\begin{prop}\label{key2}\rm{\cite[Proposition 3.4]{WWZ}}
Let ($\mathcal{A}$, $\mathcal{B}$, $\mathcal{C}$) be a recollement of extriangulated categories and $X\in\mathcal{B}$. Then the following statements hold.

$(1)$ If $i^{!}$ is exact, there exists an $\mathbb{E}_\mathcal{B}$-triangle
  \begin{equation*}\label{third}
  i_\ast i^! X\rightarrow X\rightarrow j_\ast j^\ast X\dashrightarrow.
   \end{equation*}

$(2)$ If $i^{\ast}$ is exact, there exists an $\mathbb{E}_\mathcal{B}$-triangle
  \begin{equation*}\label{four}
  j_! j^\ast X\rightarrow X\rightarrow i_\ast i^\ast X \dashrightarrow.
   \end{equation*}
\end{prop}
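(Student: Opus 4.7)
The plan is to take the left exact $\EE_\B$-triangle sequence provided by (R4) and use exactness of $i^!$ to collapse its ``obstruction'' tail to zero, leaving a genuine three-term $\EE_\B$-triangle. Part~(2) will then follow by a formally dual argument starting from (R5), with $i^\ast$ exact in place of $i^!$.

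Concretely, I would first unpack (R4): by the definition of left exact $\EE_\B$-triangle sequence, there is an intermediate object $Y\in\B$ together with two $\EE_\B$-triangles
\[
i_\ast i^!X\xrightarrow{\theta_X} X\to Y\dashrightarrow,\qquad Y\to j_\ast j^\ast X\to i_\ast A\dashrightarrow,
\]
whose middle composition $X\to Y\to j_\ast j^\ast X$ equals $\vartheta_X$. Applying the (now exact) functor $i^!$ to the first $\EE_\B$-triangle and invoking $i^!i_\ast\cong\Id_\A$ from Proposition~\ref{key1}(1) together with the adjunction triangle identities, the resulting $\EE_\A$-triangle has the shape $i^!X\xrightarrow{\Id}i^!X\to i^!Y\dashrightarrow$. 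Feeding this triangle into the long exact sequence from $\Hom_\A(Z,-)$ for arbitrary $Z\in\A$ and using that the leftmost induced map is an isomorphism, one reads off $\Hom_\A(Z,i^!Y)=0$; taking $Z=i^!Y$ forces $i^!Y=0$.

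Next I would apply $i^!$ to the second $\EE_\B$-triangle; with $i^!j_\ast=0$ from Proposition~\ref{key1}(2), $i^!i_\ast\cong\Id_\A$, and the just-established $i^!Y=0$, one obtains an $\EE_\A$-triangle $0\to 0\to A\dashrightarrow$. The long exact sequence from $\Hom_\A(A,-)$ collapses to give $\Hom_\A(A,A)\hookrightarrow\EE_\A(A,0)=0$, so $\Id_A=0$ and hence $A=0$. Since $i_\ast$ is fully faithful, $i_\ast A=0$; the second $\EE_\B$-triangle therefore degenerates to an isomorphism $Y\xrightarrow{\sim} j_\ast j^\ast X$, and substituting back into the first $\EE_\B$-triangle produces the desired $\EE_\B$-triangle $i_\ast i^!X\to X\to j_\ast j^\ast X\dashrightarrow$.

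The main obstacle I anticipate is bookkeeping within the extriangulated formalism rather than any genuinely hard single step: one must parse the precise definition of ``left exact $\EE_\B$-triangle sequence'' from \cite{WWZ} so that the two-triangle decomposition above is legitimate, and then repeatedly invoke the long exact sequence to argue that cofibers of isomorphisms vanish and that $\EE_\A$-triangles of the form $0\to 0\to A\dashrightarrow$ force $A=0$ — facts that are trivial in the triangulated or abelian setting but need the $\EE$-triangle axioms (together with WIC) here. Part~(2) is the mirror argument: unpack (R5) as two $\EE_\B$-triangles joined at an obstruction $i_\ast A'$, apply the exact $i^\ast$ using $i^\ast i_\ast\cong\Id_\A$ and $i^\ast j_!=0$, and conclude $A'=0$ to extract the $\EE_\B$-triangle $j_!j^\ast X\to X\to i_\ast i^\ast X\dashrightarrow$.
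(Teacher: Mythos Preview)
The paper does not prove this proposition at all; it is stated with a citation to \cite[Proposition 3.4]{WWZ} and no argument is given. In fact, the authors explicitly remark just above it that they ``omit the definitions of left, right exact functors and left, right exact $\EE_\E$-triangle, since they will not be used in the argument,'' so there is nothing in this paper to compare your proposal against.

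That said, your outline is essentially the standard argument for this fact and is sound in spirit. The one point you correctly flag as delicate is the unpacking of ``left exact $\EE_\B$-triangle sequence'' into two genuine $\EE_\B$-triangles glued at an intermediate object $Y$; this is indeed how \cite{WWZ} defines the notion, so your decomposition is legitimate. After that, the two applications of the exact $i^!$ together with the adjunction identities and Proposition~\ref{key1}(1),(2) kill first $i^!Y$ and then $A$, exactly as you describe. The only small cosmetic point: once you have the $\EE_\A$-triangle $i^!X\xrightarrow{\sim}i^!X\to i^!Y\dashrightarrow$ with first map an isomorphism, the vanishing of $i^!Y$ follows directly from the extriangulated axioms (the cocone of an isomorphism is zero), so you do not really need the long exact sequence or (WIC) for that step. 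Part~(2) is, as you say, formally dual.
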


\subsection{Gluing cotorsion pairs}

From now on, we assume all extriangulated categories are Krull-Schmidt, Hom-finite, $k$-linear. We first introduce some notions.

\begin{defn}\label{def1}
Let $\C$ and $\D$ be two subcategories in $\E$.
\begin{itemize}
\item[(a)] Denote by $\CoCone(\C,\D)$ the subcategory
$$\{X\in \E \text{ }|\text{ } \exists \text{ } \EE_{\E}\text{-triangle }  X\to C\to D\dashrightarrow ~\mbox{where}~C\in \C \text{ and }D\in \D\}.$$
Let $\Omega_\E \C=\CoCone(\mathcal P_\E,\C)$. We write an object $X$ in the form $\Omega_\E C$ if it admits an $\EE_{\E}$-triangle $X\to P\to C\dashrightarrow$ with $P\in \mathcal P_\E$. Let $\Omega^0_\E \C=\C$ and $\Omega^1_\E\C=\Omega_\E \C$. Assume we have defined $\Omega^i_\E \C$, $i\geq 1$, then we can denote $\CoCone(\mathcal P_\E, \Omega^i \C)$ by $\Omega^{i+1}_\E \C$.

\bigskip

\item[(b)] Denote by $\Cone(\C,\D)$ the subcategory
$$\{Y\in \E \text{ }|\text{ } \exists \text{ } \EE_\E\text{-triangle } C'\to D'\to Y\dashrightarrow~\mbox{where}~C'\in \C \text{ and }D'\in \D \}.$$
 Let $\Sigma_\E \D=\Cone(\D,\mathcal I_\E)$. We write an object $Y$ in the form $\Sigma_\E D$ if it admits an $\EE_\E$-triangle $D\to I\to Y\dashrightarrow$ with $I\in \mathcal I_\E$. Let $\Sigma^0_\E \D=\D$ and $\Sigma^1_\E \D=\Sigma_\E \D$. Assume we have defined $\Sigma^j_\E \D$, $j\geq 1$, then we can denote $\Cone(\Sigma^j_\E\D,\mathcal I)$ by $\Sigma^{j+1}_\E \D$.

\bigskip

\item[(c)] Let $\C^{\vee}_0=\C^{\wedge}_0=\C$. We denote $\Cone(\C^{\wedge}_{i-1},\C)$ by $\C^{\wedge}_i$ and $\CoCone(\C,\C^{\vee}_{i-1})$ by $\C^{\vee}_i$ for any $i\geq 1$.\\
We denote $\bigcup\limits_{i\geq 0}\C^{\wedge}_i$ by $\C^{\wedge}$ and $\bigcup\limits_{i\geq 0}\C^{\vee}_i$ by $\C^{\vee}$.

\smallskip

\end{itemize}
\end{defn}

In the rest of this article,  we assume that $\E$ has enough projectives and enough injectives, then we can define higher extension groups as $\EE^{i+1}_\E(X,Y ):=\EE_\E(\Omega^i_\E X,Y).$ Liu and Nakaoka \cite[Proposition 5.2]{LN} proved that
$$\EE_\E(\Omega^i_\E X,Y)\simeq \EE_\E(X,\Sigma^i_\E Y).$$
For any subcategory $\C\subseteq \E$, let
\begin{itemize}
\item[(1)] $\C^{\bot}=\{X\in \E\text{ }|\text{ }\EE^i_\E(\C,X)=0, \forall i>0\}$;
\item[(2)] $\C^{\bot_1}=\{X\in \E\text{ }|\text{ }\EE_\E(\C,X)=0\}$;
\item[(3)] ${^{\bot}}\C=\{X\in \E\text{ }|\text{ }\EE^i_\E(X,\C)=0, \forall i>0\}$;
\item[(4)] ${^{\bot_1}}\C=\{X\in \E\text{ }|\text{ }\EE_\E(X,\C)=0\}$.
\end{itemize}

\begin{defn}\cite[Definition 4.1]{NP}
Let $\U$ and $\V$ be two subcategories of $\E$. We call $(\U,\V)$ a \emph{cotorsion pair} if it satisfies the following conditions:
\begin{itemize}
\item[(a)] $\EE_\E(\U,\V)=0$.
\smallskip

\item[(b)] For any object $B\in \E$, there are two $\EE_\E$-triangles
\begin{align*}
V_B\rightarrow U_B\rightarrow B{\dashrightarrow},\quad
B\rightarrow V^B\rightarrow U^B{\dashrightarrow}
\end{align*}
satisfying $U_B,U^B\in \U$ and $V_B,V^B\in \V$.
\end{itemize}
A cotorsion pair $(\U,\V)$ is said to be hereditary if $\EE^2_\E(\U,\V)=0$.
\end{defn}

By definition, we can conclude the following result.

\begin{lem}
Let $(\U,\V)$ be a cotorsion pair in $\B$. Then
\begin{itemize}
\item[(a)] $\V=\U^{\bot_1}$;
\item[(b)] $\U={^{\bot_1}}\V$;
\item[(c)] $\U$ and $\V$ are closed under extensions;
\item[(d)] $\mathcal I_\E\subseteq \V$ and $\mathcal P_\E\subseteq \U$.
\end{itemize}
The following are equivalent for $(\U,\V)$.
\begin{itemize}
\item[\rm (1)] $\EE^2_\E(\U,\V)=0$;
\item[\rm (2)] $\EE^i_\E(\U,\V)=0$ for any $i\geq 1$;
\item[\rm (3)] $\CoCone(\U,\U)=\U$;
\item[\rm (4)] $\Cone(\V,\V)=\V$.

\end{itemize}
\end{lem}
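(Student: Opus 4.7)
The plan is to prove parts (a)--(d) directly from the definition and the standing assumption that subcategories are closed under direct summands, and then establish the four equivalences by chasing long exact sequences of extensions, which are available because $\E$ has enough projectives and enough injectives. For (a), $\V\subseteq\U^{\bot_1}$ is immediate from $\EE_\E(\U,\V)=0$. Conversely, given $X\in\U^{\bot_1}$, I would use the second $\EE_\E$-triangle from the cotorsion pair axiom, $X\to V^X\to U^X\dashrightarrow$, and observe that its classifying element lies in $\EE_\E(U^X,X)=0$; hence the triangle splits and $X$ is a direct summand of $V^X\in\V$. Part (b) is dual. Part (d) follows since $\EE_\E(\U,\mathcal I_\E)=0=\EE_\E(\mathcal P_\E,\V)$ combined with (a), (b). For (c), I would apply $\Hom(-,V)$ for an arbitrary $V\in\V$ to an $\EE_\E$-triangle $U_1\to U\to U_2\dashrightarrow$ with $U_1,U_2\in\U$; in the induced long exact sequence the two terms $\EE_\E(U_i,V)$ vanish, forcing $\EE_\E(U,V)=0$, so $U\in{}^{\bot_1}\V=\U$ by (b). Closure of $\V$ under extensions is symmetric.

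For the equivalences, (2)$\Rightarrow$(1) is immediate. I would prove (1)$\Rightarrow$(3) as follows: the inclusion $\U\subseteq\CoCone(\U,\U)$ is automatic via the split $\EE_\E$-triangle $X\to X\to 0\dashrightarrow$; for the reverse, given an $\EE_\E$-triangle $X\to U_1\to U_2\dashrightarrow$ with $U_i\in\U$, applying $\Hom(-,V)$ for $V\in\V$ yields an exact segment $\EE_\E(U_1,V)\to\EE_\E(X,V)\to\EE^2_\E(U_2,V)$, whose outer terms vanish by the cotorsion pair axiom and hypothesis (1), so $X\in{}^{\bot_1}\V=\U$. For (3)$\Rightarrow$(2), I would argue by induction using the syzygy triangle $\Omega_\E U\to P\to U\dashrightarrow$ with $P\in\mathcal P_\E$; since $P,U\in\U$ by (d), property (3) forces $\Omega_\E U\in\U$. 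Iterating gives $\Omega^{i-1}_\E U\in\U$, and then $\EE^i_\E(U,V)=\EE_\E(\Omega^{i-1}_\E U,V)=0$ for every $V\in\V$.

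The equivalence (1)$\Leftrightarrow$(4) is entirely dual: replace syzygies by cosyzygies via the $\EE_\E$-triangle $V\to I\to\Sigma_\E V\dashrightarrow$ with $I\in\mathcal I_\E\subseteq\V$, and invoke the isomorphism $\EE^i_\E(U,V)\cong\EE_\E(U,\Sigma^{i-1}_\E V)$ from \cite{LN}. The only real technical input is the existence of the long exact sequence in higher $\EE$-groups under a covariant or contravariant $\Hom$, but this is a standard consequence of having enough projectives and injectives in the extriangulated category, which is precisely the hypothesis in force; so I expect no substantial obstacle beyond carefully keeping track of the direction of the arrows when dualizing.
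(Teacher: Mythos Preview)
Your proposal is correct and is precisely the standard argument one would expect; the paper itself does not give a proof, merely prefacing the lemma with ``By definition, we can conclude the following result.'' Your write-up fills in exactly those details, using the splitting arguments for (a)--(d) and the syzygy/cosyzygy long exact sequences for the equivalences, all of which are the intended routine verifications.
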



\begin{prop}\label{key0}
Let $(\A ,\B,\C)$ be a recollement of extriangulated  categories. Assume that $\B$ has enough projectives and $i^!,j_!$ are exact. Let $(\mathcal{U}_{1},\mathcal{V}_{1})$ and $(\mathcal{U}_{3},\mathcal{V}_{3})$ be cotorsion pairs in $\mathcal{A}$ and $\mathcal{C}$, respectively. Define
$$\widetilde{\mathcal{U}_2}=\{B\in \mathcal{B}~|~i^{\ast }B\in\mathcal{U}_{1}~\text{and}~j^{\ast}B\in \mathcal{U}_{3}  \};$$
$$\mathcal{V}_2=\{B\in \mathcal{B}~|~i^{!}B\in\mathcal{V}_{1}~\text{and}~j^{\ast}B\in \mathcal{V}_{3}  \}.$$
Then $(\U_2:={^{\bot_1}}\V_2,\mathcal{V}_2)$ is a cotorsion pair in $\mathcal{B}$, where $\U_2\subseteq \add\widetilde{\U_2}$.
\end{prop}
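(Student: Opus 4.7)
The plan is to verify the two defining conditions of a cotorsion pair for $(\U_2,\V_2)$ and then deduce the inclusion $\U_2\subseteq\add\widetilde{\U_2}$. Condition (a), $\EE_\B(\U_2,\V_2)=0$, is immediate from $\U_2={^{\bot_1}}\V_2$. For condition (b) two preliminary facts will guide the construction. First, $i_*\V_1\cup j_*\V_3\subseteq\V_2$ (using $i^!i_*=\Id$, $j^*j_*=\Id$, $i^!j_*=0$, $j^*i_*=0$ from Proposition \ref{key1}(1), (2)), and $\V_2$ is closed under extensions because $\V_1,\V_3$ are and $i^!, j^*$ are exact. Second, $i_*\U_1\cup j_!\U_3\subseteq\U_2$ via the adjunction isomorphisms $\EE_\B(i_*U,V)\cong\EE_\A(U,i^!V)$ and $\EE_\B(j_!U,V)\cong\EE_\C(U,j^*V)$ from Proposition \ref{key1}(7), (7') (the latter applying since $j_*$ exact, by Proposition \ref{key1}(8'), gives $\C$ enough projectives).

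For the covering triangle over $B\in\B$, I would take cotorsion approximations $V^a\to U^a\xrightarrow{\pi^a}i^!B\dashrightarrow$ in $\A$ and $V^c\to U^c\xrightarrow{\pi^c}j^*B\dashrightarrow$ in $\C$. Put $U_B:=i_*U^a\oplus j_!U^c$, which lies in $\U_2$ by the second observation, and define $\phi:U_B\to B$ by combining $i_*U^a\xrightarrow{i_*\pi^a}i_*i^!B\xrightarrow{\theta_B}B$ and $j_!U^c\xrightarrow{j_!\pi^c}j_!j^*B\xrightarrow{\upsilon_B}B$, where $\theta_B,\upsilon_B$ are the adjunction counits. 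By construction $i^!\phi\cong\pi^a$ and $j^*\phi\cong\pi^c$ are deflations. The crucial step is to show that $\phi$ is itself a deflation in $\B$; once established, its kernel $V_B$ automatically satisfies $i^!V_B\cong V^a\in\V_1$ and $j^*V_B\cong V^c\in\V_3$ by exactness of $i^!,j^*$, hence $V_B\in\V_2$. I would prove the deflation property by iterating the pullback-of-deflations axiom along the canonical $\EE_\B$-triangle $i_*i^!B\to B\to j_*j^*B\dashrightarrow$ from Proposition \ref{key2}(1), using Proposition \ref{key1}(8') (exactness of $j_*$) to transport the $\C$-side cotorsion approximation into $\B$ as a deflation over $j_*j^*B$ that is then glued with the $\A$-side approximation through this canonical triangle.

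The envelope triangle $B\to V^B\to U^B\dashrightarrow$ is constructed by the entirely symmetric argument, starting from cotorsion envelope approximations $i^!B\to\overline V^a\to\overline U^a\dashrightarrow$ and $j^*B\to\overline V^c\to\overline U^c\dashrightarrow$ and setting $U^B:=i_*\overline U^a\oplus j_!\overline U^c$. For the inclusion $\U_2\subseteq\add\widetilde{\U_2}$, take any $U\in\U_2$ and apply the covering construction above to $B=U$: the resulting $U_U=i_*U^a\oplus j_!U^c$ satisfies $i^*U_U=U^a\in\U_1$ (using $i^*i_*=\Id$ and $i^*j_!=0$ from Proposition \ref{key1}(1), (2)) and $j^*U_U=U^c\in\U_3$, so $U_U\in\widetilde{\U_2}$; since $V_U\in\V_2$, the triangle $V_U\to U_U\to U\dashrightarrow$ splits by the vanishing $\EE_\B(U,V_U)=0$, exhibiting $U$ as a direct summand of $U_U\in\widetilde{\U_2}$.

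The principal obstacle is the reflection-type step: showing that a morphism in $\B$ is a deflation once its images under $i^!$ and $j^*$ are deflations. Although the canonical triangle $i_*i^!B\to B\to j_*j^*B\dashrightarrow$ and the pullback axiom in extriangulated categories satisfying (WIC) make this morally clear, the actual argument must carefully navigate between $j_!$ (needed for the adjunction formula of Proposition \ref{key1}(7') that places $j_!\U_3$ inside $\U_2$) and $j_*$ (in which the canonical triangle of Proposition \ref{key2}(1) is phrased); this is precisely why both exactness assumptions, $i^!$ exact and $j_!$ exact, enter simultaneously into the hypothesis.
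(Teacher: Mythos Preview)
Your approach has a genuine gap, and it is not just the acknowledged ``reflection-type step''. You claim that, once $\phi:i_*U^a\oplus j_!U^c\to B$ is a deflation, exactness of $i^!$ yields $i^!V_B\cong V^a$. But this rests on the implicit assertion $i^!\phi\cong\pi^a$, which fails: while $j^*i_*=0$ gives $j^*U_B=U^c$, nothing in the hypotheses forces $i^!j_!=0$, so $i^!U_B=U^a\oplus i^!j_!U^c$ and $i^!\phi$ is a two-component map $(\pi^a,\ast)$. Its fibre $i^!V_B$ is then an extension involving $i^!j_!U^c$, and there is no reason it should lie in $\V_1$. (The vanishing $i^!j_!=0$ is only available in the abelian situation under the extra hypothesis that $i^*$ is exact; see the paper's later use of this.) The same obstruction contaminates the ``symmetric'' envelope construction and, through it, your proof of $\U_2\subseteq\add\widetilde{\U_2}$.

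The paper sidesteps both the deflation-detection issue and the $i^!j_!$ issue entirely. For the envelope triangle $X\to V\to U\dashrightarrow$ it quotes the explicit diagram of \cite[Lemma~4.5(2)]{WWZ}, in which $V$ is produced as an iterated extension, so $V\in\V_2$ holds by construction and $U$ sits in an $\EE_\B$-triangle $j_!U_3\to U\to i_*U_1\dashrightarrow$; one then checks $U\in\U_2$ via the adjunction isomorphisms of Proposition~\ref{key1}(7),(7'). For the covering triangle the paper does \emph{not} attempt a direct construction: instead it takes $\Omega_\B X\to P_X\to X\dashrightarrow$ using enough projectives, applies the already-established envelope to $\Omega_\B X$, and pushes out, so that the $\U_2$-term is obtained as an extension of $P_X\in\U_2$ by an object of $\U_2$. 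The inclusion $\U_2\subseteq\add\widetilde{\U_2}$ is then read off from \cite[Lemma~4.5(1)]{WWZ}. None of these steps requires knowing that a morphism in $\B$ is a deflation merely because $i^!$ and $j^*$ send it to deflations.
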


\begin{proof}
According to the proof of \cite[Lemma 4.5(2)]{WWZ}, any object $X\in B$ admits a commutative diagram
$$\xymatrix{
X \ar[r] \ar@{=}[d] &H \ar[r] \ar[d] &j_!U_3 \ar@{-->}[r] \ar[d] &\\
X \ar[r] &V \ar[d] \ar[r] &U \ar@{-->}[r] \ar[d] &\\
&i_*U_1 \ar@{=}[r] \ar@{-->}[d] &i_*U_1 \ar@{-->}[d]\\
&&
}
$$
where $V\in \V_2$, $U_1\in \U_1$ and $U_3\in \U_3$. By applying $\Hom_B(-,\V_2)$ to the third column, we can get an exact sequence
$$\EE_\B(i_*U_1,\V_2)\to \EE_\B(U,\V_2)\to \EE_\B(j_!U_3,\V_2).$$
By Proposition \ref{key1}, we have  $\EE_\B(i_*U_1,\V_2)\simeq \EE_\mathcal A(U_1,i^!\V_2)=0$ and $\EE_\B(j_!U_3,\V_2)\simeq \EE_\C(U_3,j^*\V_2)=0$. Hence  $\EE_\B(U,\V_2)=0$, which implies $U\in \U_2$.

Since $\B$ has enough projectives, $X$ admits an $\EE_\B$-triangle $\Omega_\B X\to P_X\to X\dashrightarrow$ with $P_X\in \mathcal P_\B$. Since $\Omega_\B X$ admits an $\EE_\B$-triangle $\Omega_\B X\to V_2\to U_2\dashrightarrow$ where $V_2\in \V_2$ and $U_2\in \U_2$, we can get the following commutative diagram.
$$\xymatrix{
\Omega_\B X \ar[r] \ar[d] &P_X \ar[r] \ar[d] &X \ar@{=}[d] \ar@{-->}[r] &\\
V_2 \ar[r] \ar[d] &U_2' \ar[d] \ar[r] &X  \ar@{-->}[r] &\\
U_2 \ar@{=}[r] \ar@{-->}[d] &U_2 \ar@{-->}[d]\\
&&
}
$$
Since $P_X\in \U_2$, we have $U_2'\in \U_2$.  To get that $(\U_2,\V_2)$ is a cotorsion pair, we still need to show that $\V_2$ is closed under direct sums and direct summands. It is enough to show that for any object $Y$, if $\EE_\B(X,Y)=0$ with $X\in \U_2$, then $Y\in \V_2$.

Let $U_1\in \U_1$, then $i_*U_1\in \U_2$ since $\EE_\B(i_*U_1,\V_2)\simeq \EE_\mathcal A(U_1,i^!\V_2)=0$. We have $\EE_\mathcal A(U_1,i_!Y)\simeq \EE_\B(i_*U_1,Y)=0$. Hence $i^!Y\in \V_1$. By the similar method, we can get that $j^*Y\in \V_3$. Hence $Y\in \V_2$.

Let $U_2'$ be any object in $\U_2$. By \cite[Lemma 4.5(1)]{WWZ}, $U_2'$ admits an $\EE_\B$-triangle $V\to U\to U'_2\dashrightarrow$ where $V\in \V_2$ and $U\in \widetilde{\U_2}$. Since $\EE_\B(U_2',V)=0$, this sequence splits, which implies that $U_2'$ is a direct summand of $U$. Hence $\U_2\subseteq \add\widetilde{\U_2}$.
\end{proof}

\begin{rem}
Note that by Proposition \ref{key1}, $i^*$ is exact implies that $j_!$ is exact. If we assume the exactness of $i^*$ instead of $j_!$, we can get that $\widetilde{\U_2}=\U_2$.
\end{rem}

Under the same settings as in Proposition \ref{key0}, we show the following proposition.

\begin{prop}\label{p1}
Assume $\B$ also has enough injectives. If $(\U_1,\V_1)$ and $(\U_3,\V_3)$ are hereditary, then $(\U_2,\V_2)$ is also hereditary.
\end{prop}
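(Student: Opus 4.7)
The plan is to verify the cone-closure characterization of heredity supplied by the lemma immediately preceding Proposition \ref{key0}: a cotorsion pair $(\U,\V)$ is hereditary exactly when $\Cone(\V,\V)=\V$. Since the inclusion $\V_2\subseteq\Cone(\V_2,\V_2)$ is automatic (the split $\EE_\B$-triangle $0\to V\to V\dashrightarrow$ does the job, noting $0\in\V_2$ because $i^!0=0\in\V_1$ and $j^*0=0\in\V_3$), the only real task is to show $\Cone(\V_2,\V_2)\subseteq\V_2$.

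Concretely, I would fix an arbitrary $\EE_\B$-triangle $V\to V'\to W\dashrightarrow$ with $V,V'\in\V_2$ and aim to prove $W\in\V_2$, i.e.\ that $i^!W\in\V_1$ and $j^*W\in\V_3$. The decisive input is the exactness of $i^!$ (by hypothesis of Proposition \ref{key0}) and of $j^*$ (from Definition \ref{recollement}). Applying these functors produces an $\EE_\A$-triangle $i^!V\to i^!V'\to i^!W\dashrightarrow$ and an $\EE_\C$-triangle $j^*V\to j^*V'\to j^*W\dashrightarrow$ whose first two terms already lie in $\V_1$ and $\V_3$ respectively; applying the same cone-closure characterization inside the hereditary cotorsion pairs $(\U_1,\V_1)$ and $(\U_3,\V_3)$ then forces $i^!W\in\V_1$ and $j^*W\in\V_3$, so $W\in\V_2$.

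I do not anticipate a genuine obstacle: the argument is little more than a transport of cone-closure through the two exact functors, and the extra hypothesis that $\B$ has enough injectives enters only implicitly, to validate the equivalence between heredity and cone-closure of $\V_2$. One could instead try to verify the dual condition $\CoCone(\U_2,\U_2)\subseteq\U_2$, but since $i^*$ is not assumed exact and $\U_2$ is only known to lie in $\add\widetilde{\U_2}$, that route would be noticeably more delicate; working on the $\V_2$-side sidesteps both issues cleanly.
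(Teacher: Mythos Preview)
Your proposal is correct and follows essentially the same route as the paper: verify heredity via cone-closure of $\V_2$, push an $\EE_\B$-triangle through the exact functors $i^!$ and $j^*$, and invoke heredity of $(\U_1,\V_1)$ and $(\U_3,\V_3)$. (In fact the paper's write-up says ``$\CoCone(\V_2,\V_2)\subseteq\V_2$'' and ``$\CoCone(\V_1,\V_1)=\V_1$'' where $\Cone$ is clearly meant, so your version is the typo-free one.)
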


\begin{proof}
We only need to show that $\CoCone(\V_2,\V_2)\subseteq \V_2$. Let
$$V\to V'\to X\dashrightarrow$$
be an $\EE_\B$-triangle with $V,V'\in \V_2$. Since $i^!$ is exact, we get an $\EE_\mathcal A$-triangle
$$i^!V\to i^!V'\to i^!X\dashrightarrow.$$
By definition of $\V_2$, we have $i^!V, i^!V'\in \V_1$. Since $(\U_1,\V_1)$ is hereditary, we have $\CoCone(\V_1,\V_1)= \V_1$, hence $i^!X\in \V_1$. By the similar argument we can show that $j^*X\in \V_3$. Hence by definition $X\in \V_2$.
\end{proof}

We call the cotorsoin pair $(\U_2,\V_2)$ got in Proposition \ref{key0} the \emph{glued cotorsion pair with respect to $(\U_1,\V_1)$ and $(\U_3,\V_3)$}. We have the following observation.
\begin{prop}
Let $(\A ,\B,\C)$ be a recollement of abelian categories. Assume that $i^!,i^*$ are exact, $\B$ has enough projectives and enough injectives . Let $(\mathcal{U}_{1},\mathcal{V}_{1})$ and $(\mathcal{U}_{3},\mathcal{V}_{3})$ be cotorsion pairs in $\mathcal{A}$ and $\mathcal{C}$ respectively. Let $(\U_2,\V_2)$ be the glued cotorsion pair and $\T_i=\U_i\cap \V_i, i=1,2,3$. Then any indecomposable object $T\in \T_2$ satisfies one of the following conditions:
\begin{itemize}
\item[(1)] There is an indecomposable object $T'\in \T_1$ such that $T\simeq i_*T'$.
\item[(2)] There is an indecomposable object $T''\in \T_3$ such that $T\simeq j_!T''$.
\end{itemize}
\end{prop}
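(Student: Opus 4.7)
The plan is to show that the canonical short exact sequence produced by Proposition \ref{key2}(2) applied to $T$ always splits in this setting, so that indecomposability forces $T$ to coincide with one of the two summands. Under the hypotheses, $i^*$ exact gives $j_!$ exact via Proposition \ref{key1}(8), so the glued cotorsion pair of Proposition \ref{key0} is available, and any $T\in\T_2$ satisfies $i^*T\in\U_1$, $j^*T\in\U_3\cap\V_3=\T_3$, and $i^!T\in\V_1$. In the abelian setting, the $\EE_\B$-triangle from Proposition \ref{key2}(2) becomes a genuine short exact sequence
$$0\to j_!j^*T \longrightarrow T \longrightarrow i_*i^*T \to 0.$$

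The heart of the argument, and what I expect to be the main technical obstacle, is establishing the auxiliary vanishing $i^!j_!=0$. To this end I would apply the displayed short exact sequence to $X=j_*Z$ for an arbitrary $Z\in\C$; using $j^*j_*\simeq \Id_\C$ (Proposition \ref{key1}(1)) this becomes $0\to j_!Z \to j_*Z \to i_*i^*j_*Z \to 0$. Applying the exact functor $i^!$ and invoking $i^!j_*=0$ (Proposition \ref{key1}(2)) together with $i^!i_*\simeq\Id_\A$ (Proposition \ref{key1}(1)) collapses the resulting sequence to $0\to i^!j_!Z \to 0 \to i^*j_*Z \to 0$, yielding $i^!j_!Z=0$.

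With this vanishing in hand, Proposition \ref{key1}(7) identifies
$$\EE_\B(i_*i^*T,\, j_!j^*T)\;\simeq\; \EE_\A(i^*T,\, i^!j_!j^*T) \;=\; 0,$$
so the short exact sequence splits and $T\simeq j_!j^*T \oplus i_*i^*T$. Indecomposability forces one summand to vanish. If $i_*i^*T=0$, faithfulness of $i_*$ gives $i^*T=0$, so $T\simeq j_!T''$ with $T''=j^*T\in\T_3$; the indecomposability of $T''$ follows from fully faithfulness of $j_!$, because any nontrivial decomposition $T''\simeq X''\oplus Y''$ would produce a nontrivial decomposition $T\simeq j_!X''\oplus j_!Y''$. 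Otherwise $j_!j^*T=0$ and so $j^*T=0$, whence $T\simeq i_*T'$ for $T'=i^*T$; one then checks $T'\in\U_1$ (because $T\in\U_2$) and $T'\simeq i^!i_*T'\simeq i^!T\in\V_1$ (because $T\in\V_2$), so $T'\in\T_1$, with indecomposability again transferred along the fully faithful functor $i_*$.
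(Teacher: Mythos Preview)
Your proof is correct and follows the same overall strategy as the paper: exploit the short exact sequence $0\to j_!j^*T\to T\to i_*i^*T\to 0$ from Proposition~\ref{key2}(2), show it splits using $i^!j_!=0$, and then let indecomposability force $T$ to coincide with one of the two terms. The executions differ in two places. First, the paper simply invokes $i^!j_!=0$ as a known fact for abelian recollements with $i^*$ exact and uses it to place $j_!j^*T$ in $\T_2\subseteq\V_2$ (so the sequence splits because $i_*i^*T\in\U_2$), whereas you supply a self-contained proof of the vanishing and then apply it directly through the adjunction isomorphism of Proposition~\ref{key1}(7). Second, in the case $T\simeq i_*i^*T$ the paper takes a more roundabout route: it chooses a resolution $i^*T\to T_1\to U_1$ in $\A$ with $T_1\in\T_1$ and $U_1\in\U_1$, applies $i_*$, observes that $i_*U_1\in\U_2$ so the resulting triangle splits against $T\in\V_2$, and concludes that $i^*T$ is a summand of $T_1\in\T_1$. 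Your observation that $T'=i^*T\simeq i^!i_*T'\simeq i^!T\in\V_1$ reaches $T'\in\T_1$ in one line and is cleaner.
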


\begin{proof}
Let $T\in \T_2$ be any indecomposable object. It admits an $\EE_\B$-triangle
$$j_! j^* T\rightarrow T\rightarrow i_\ast i^\ast T\dashrightarrow.$$
We have $i^* T\in \U_1$ and $i_\ast i^* T\in\U_2$. We also have $j^\ast T\in \T_3$. Since $i^!j_!=0$ when $i^*$ is exact in the recollement of abelian categories, we have $j_!j^*T\in \T_2$. Thus this sequence splits and we get $T\simeq j_! j^\ast T$ or $T\simeq i_\ast i^* T$.

If $T\simeq j_! j^\ast T$, since $j_!$ is faithful, we can get that $j^*T$ is indecomposable. Hence condition {\rm (2)} is satisfied.

Assume $T\simeq i_\ast i^* T$. $i_*T$ admits an $\EE_\mathcal A$-triangle $i^*T\to T_1\to U_1\dashrightarrow$ where $T_1\in \T_1$ and $U_1\in \U_1$. By applying $i_*$, we can get an $\EE_\B$-triangle $T\to i_*T_1\to i_*U_1\dashrightarrow$. Since $i_*U_1\in \U_2$, this sequence splits. Hence $T$ is a direct summand of $i_*T_1$, which implies that $i^*T$ is a direct summand of $T_1$. Since $i_*$ is faithful, we can get that $i^*T$ is indecomposable. Hence the condition {\rm (1)} is satisfied.
\end{proof}

We can get the following corollary immediately.

\begin{cor}
Under the settings of the previous proposition, assume $\T_1=\add T_1$ and $\T_3=\add T_3$, then $\T_2=\add T_2$ where $T_2=i_*T_1\oplus j_!T_3$.
\end{cor}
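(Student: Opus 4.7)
The plan is to split the equality $\T_2=\add T_2$ into two inclusions: first verify $T_2=i_*T_1\oplus j_!T_3 \in \T_2$, which immediately gives $\add T_2\subseteq \T_2$ since $\T_2=\U_2\cap \V_2$ is closed under direct sums and summands; second, use the previous proposition together with Krull--Schmidt to get $\T_2\subseteq \add T_2$.

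For the forward inclusion I would check the two summands separately. That $i_*T_1\in\U_2$ is essentially already computed in the proof of Proposition \ref{key0}: the adjunction $\EE_\B(i_*T_1,\V_2)\simeq \EE_\A(T_1,i^!\V_2)=0$ (valid since $T_1\in\U_1$) shows $i_*T_1\in {^{\bot_1}}\V_2=\U_2$; the identities $i^!i_*\simeq \Id_\A$ of Proposition \ref{key1}(1) and $j^*i_*=0$ from (R2) then yield $i^!i_*T_1\simeq T_1\in\V_1$ and $j^*i_*T_1=0\in\V_3$, so $i_*T_1\in\V_2$. The argument for $j_!T_3$ is symmetric: $\EE_\B(j_!T_3,\V_2)\simeq \EE_\C(T_3,j^*\V_2)=0$ since $j^*\V_2\subseteq\V_3$ by definition of $\V_2$, giving $j_!T_3\in\U_2$; and $j^*j_!T_3\simeq T_3\in\V_3$ together with $i^!j_!T_3=0\in\V_1$ (invoking the identity $i^!j_!=0$ that holds in an abelian recollement whenever $i^*$ is exact, the same fact used in the previous proposition) give $j_!T_3\in\V_2$.

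For the reverse inclusion, let $T\in \T_2$ be arbitrary. By Krull--Schmidt $T$ decomposes as a finite direct sum of indecomposables, and each summand still lies in $\T_2$ because $\T_2$ is closed under direct summands. The previous proposition then classifies any indecomposable summand as either $i_*T'$ with $T'\in\T_1$ indecomposable, or $j_!T''$ with $T''\in\T_3$ indecomposable. Using the hypotheses $\T_1=\add T_1$ and $\T_3=\add T_3$, such an indecomposable $T'$ must be a direct summand of $T_1$ and such a $T''$ a direct summand of $T_3$; applying the additive functors $i_*$ and $j_!$ respectively shows each indecomposable summand of $T$ is a direct summand of $T_2=i_*T_1\oplus j_!T_3$, so $T\in\add T_2$.

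There is no real obstacle here: once the previous proposition is in hand, the argument is essentially bookkeeping with adjunction isomorphisms. The only point requiring a brief note is the invocation of $i^!j_!=0$ in the abelian-recollement setting under exactness of $i^*$, which is needed to place $j_!T_3$ in $\V_2$ and which is already tacitly used in the statement of the previous proposition.
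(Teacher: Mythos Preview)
Your argument is correct and is exactly the elaboration the paper has in mind when it says the corollary follows ``immediately'': the inclusion $\T_2\subseteq\add T_2$ is a direct consequence of the previous proposition via Krull--Schmidt, while the inclusion $\add T_2\subseteq\T_2$ follows from the same adjunction and recollement identities (including $i^!j_!=0$) already exploited in that proposition's proof. There is nothing to add.
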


\section{$n$-tilting and $n$-cotilting subcategories}

\begin{defn}
A subcategory $\D\subsetneq \E$ is said to have finite projective dimension if there is a natural number $n$ such that $\D\subseteq (\mathcal P_\E)^{\wedge}_n$. The minimal $n$ that satisfies this condition is called the projective dimension of $\D$. In this case, we write $\pd_\E \D=n$. The projective dimension of an object $D$ is just the projective dimension of $\add D$.

Dually, we can define the injective dimension $\id_\E \D$ (resp. $\id_\E D$)of a subcategory $\D$ (resp. an object $D$).
\end{defn}

\begin{defn}\label{deftil}\cite[Defintion 3.2]{LZZZ}
A subcategory $\T\subseteq \E$ is called an $n$-tilting subcategory~{\rm (}$n\geq 1${\rm )} if
\begin{itemize}
\item[(P1)] $\pd_\E \T\leq n$.
\item[(P2)] $\EE^i_\E(\T,\T)=0,i=1,2,...,n$.
\item[(P3)] Any projective object $P$ admits $\EE_\E$-triangles
$$P\to T_0\to R_1\dashrightarrow,\quad R_1\to T_1\to R_2\dashrightarrow,\quad \cdots, R_{n-1}\to T_{n-1}\to T_n\dashrightarrow$$
where $T_i\in \T,i=0,1,...,n$.
\end{itemize}
For convenience, any $n$-tilting subcategory can be simply called a generalized tilting subcategory. An object $T$ is called a generalized tilting object if $\add T$ is a generalized tilting subcategory. Dually we can define $n$-cotilting subcategory and $n$-cotilting object.
\end{defn}

\begin{rem}
According to \cite[Remark 4]{ZhZ}, the tilting subcategory of projective dimension $n$ defined in \cite[Definition 7]{ZhZ} is a special case of $n$-tilting subcategory in Definition \ref{deftil}.
\end{rem}

The following results are useful.

\begin{lem}\label{lem1}{\rm (see \cite[Section 3]{AT} for details)}
Let $\s$ be a subcategory in $\E$ such that $\EE_\E^i(\s,\s)=0,\forall i>0$. Then
\begin{itemize}
\item[(1)] $\EE^i_{\E}(\s^{\vee},\s^{\wedge})=0, \forall i>0$ and $\s^{\vee}\cap \s^{\wedge}=\s$.
\item[(2)] $\s^{\vee}$ is closed under direct sums, direct summands and extensions. $\CoCone(\s^{\vee},\s^{\vee})=\s^{\vee}$.
\item[(3)] $\s^{\wedge}$ is closed under direct sums, direct summands and extensions. $\Cone(\s^{\wedge},\s^{\wedge})=\s^{\wedge}$.
\item[(4)] $\s^{\bot}=(\s^{\vee})^{\bot}$ and ${^{\bot}}\s=({^{\bot}}\s)^{\wedge}$.
\end{itemize}
\end{lem}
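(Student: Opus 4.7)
The plan is to prove each item by induction on the length of $\s$-coresolutions or $\s$-resolutions, invoking the long exact sequences of extension groups (available by \cite{NP,LN}) together with Condition (WIC). The vanishing hypothesis $\EE^i_\E(\s,\s)=0$ serves as the base case, and each inductive step proceeds by applying $\Hom_\E(-,Y)$ or $\Hom_\E(X,-)$ to the defining $\EE$-triangle appearing at one end of the (co)resolution.

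For item (1), I would establish $\EE^i(\s^{\vee},\s^{\wedge})=0$ by double induction on the lengths of an $\s$-coresolution of $X\in\s^{\vee}_n$ and an $\s$-resolution of $Y\in\s^{\wedge}_m$. The defining triangles $X\to S\to X'\dashrightarrow$ and $Y'\to S'\to Y\dashrightarrow$, with $S,S'\in\s$, $X'\in\s^{\vee}_{n-1}$ and $Y'\in\s^{\wedge}_{m-1}$, yield long exact sequences of $\EE^*$ that reduce the claim to strictly smaller cases. For the equality $\s^{\vee}\cap\s^{\wedge}=\s$, the inclusion $\supseteq$ is immediate from $\s^{\vee}_0=\s^{\wedge}_0=\s$. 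Conversely, given $X$ in the intersection, the triangle $X\to S_0\to X_1\dashrightarrow$ with $S_0\in\s$ and $X_1\in\s^{\vee}$ must split, since $\EE^1(X_1,X)=0$ by the vanishing just proved; hence $X$ is a direct summand of $S_0$, and so $X\in\s$.

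Items (2) and (3) are dual, so I would only treat (2). Closure of $\s^{\vee}$ under finite direct sums is immediate. For closure under extensions of an $\EE$-triangle $X\to Y\to Z\dashrightarrow$ with $X,Z\in\s^{\vee}$, I would splice the defining coresolution triangle of $X$ against the given triangle via axiom (ET4), producing a new triangle $Y\to S\to W\dashrightarrow$ together with $Z\to W\to X'\dashrightarrow$; the latter shows $W\in\s^{\vee}$ by induction on total coresolution length, so the former coresolves $Y$. The identity $\CoCone(\s^{\vee},\s^{\vee})=\s^{\vee}$ is handled similarly by splicing a coresolution of the middle term with the input triangle. Closure of $\s^{\vee}$ under direct summands is the main technical obstacle: one inducts on the coresolution length of $X\oplus Y$ and uses Condition (WIC) to lift the idempotent decomposition through the coresolution triangle while ensuring that the resulting morphisms remain inflations—this is precisely where the general extriangulated setting requires more care than the exact or triangulated situations.

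For item (4), the inclusion $(\s^{\vee})^{\bot}\subseteq\s^{\bot}$ is trivial since $\s\subseteq\s^{\vee}$. For the reverse, I take $Y\in\s^{\bot}$ and $X\in\s^{\vee}_n$ and induct on $n$, extracting $\EE^i(X,Y)=0$ from the long exact $\EE^*(-,Y)$-sequence attached to $X\to S\to X'\dashrightarrow$; both neighboring terms vanish by the base case and the inductive hypothesis. The second equality is obtained by the dual argument applied to $\s^{\wedge}$ and ${^{\bot}}\s$. The hardest step overall is the direct-summand closure in (2) and (3); every other step is a routine long-exact-sequence chase, executed within the extriangulated framework of \cite{NP,LN}.
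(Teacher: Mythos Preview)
The paper does not give its own proof of this lemma: it is stated with the parenthetical ``see \cite[Section~3]{AT} for details'' and no argument follows. Your outline---induction on (co)resolution length, long exact $\EE^*$-sequences from \cite{LN,NP}, and (ET4) together with (co)base change for the splicing steps---is the standard Auslander--Buchweitz approach and is precisely what one expects the cited reference to contain, so in that sense your proposal matches the paper's (deferred) proof.

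One point deserves care. The second identity in (4) is printed as ${^{\bot}}\s=({^{\bot}}\s)^{\wedge}$, but this is not the dual of $\s^{\bot}=(\s^{\vee})^{\bot}$ and is in fact false as written: for $\E=\mod kA_2$ with $\s=\add S_2$ (the simple projective), the short exact sequence $S_2\rightarrowtail P_1\twoheadrightarrow S_1$ places $S_1\in({^{\bot}}\s)^{\wedge}_1$, yet $\Ext^1(S_1,S_2)\neq 0$ so $S_1\notin{^{\bot}}\s$. The intended statement is presumably ${^{\bot}}\s={^{\bot}}(\s^{\wedge})$ (the genuine dual of the first identity) or else ${^{\bot}}\s=({^{\bot}}\s)^{\vee}$; your ``dual argument'' would correctly establish either of these, but not the identity as literally printed. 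Apart from this, your sketch is sound, and you are right that the direct-summand closure in (2) and (3) is the only step requiring genuine care in the extriangulated setting.
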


\begin{prop}\label{main3}\cite[Proposition 3.7]{LZZZ}
 Let $\T\subseteq \B$ such that $\EE^i_\E(\T,\T)=0, \forall i>0$ and $\pd_\E \T\leq n$. Consider the following conditions:
\begin{itemize}
\item[(a)] $\T$ is contravariantly finite and $n$-tilting;
\item[(b)] $(\T^{\vee},\T^{\bot})$ is a cotorsion pair;
\item[(c)] $\T$ is an $n$-tilting subcategory.
\end{itemize}
We have {\rm  (a)$\Rightarrow$(b)$\Rightarrow$(c).}
\end{prop}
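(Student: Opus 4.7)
The plan is to treat the two implications in turn, with (a)$\Rightarrow$(b) requiring more work than (b)$\Rightarrow$(c).

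For (a)$\Rightarrow$(b), the orthogonality $\EE_\E(\T^\vee,\T^\bot)=0$ follows in one line from Lemma \ref{lem1}(4), which gives $\T^\bot=(\T^\vee)^\bot$. The real content is producing, for each $B\in\E$, the two cotorsion-pair triangles $V_B\to U_B\to B\dashrightarrow$ and $B\to V^B\to U^B\dashrightarrow$. My plan is first to observe that (P3) is exactly the statement $\mathcal P_\E\subseteq\T^\vee_n$: reading the coresolution of a projective $P$ from the last triangle back puts $R_{n-i}\in\T^\vee_i$, so $P\in\T^\vee_n$. For a general $B$ I would iterate the following one-step construction: take a deflation $K\to T\oplus P\to B\dashrightarrow$, where $P\to B$ is a projective cover (ensuring a deflation) and $T\to B$ is a right $\T$-approximation (available by contravariant finiteness); $T\oplus P$ belongs to $\T^\vee$ (for $P$ by the observation above, for $T$ trivially, and $\T^\vee$ is closed under sums). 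Applying this iteratively to the cocone $K$ and invoking the axiom (ET4) to stitch the successive triangles together produces, after $n$ steps, a triangle with $\T^\vee$-"top" and $\T^\bot$-"bottom"; the $\T^\bot$-vanishing of the tail is verified by dimension shifting using $\EE_\E^i(\T,\T)=0$ and $\pd_\E\T\leq n$. The triangle $B\to V^B\to U^B\dashrightarrow$ is obtained by a dual construction, seeded by feeding a projective cover of $B$ into (P3).

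For (b)$\Rightarrow$(c), only condition (P3) needs to be verified. Every cotorsion pair contains the projectives in its left class, so $\mathcal P_\E\subseteq \T^\vee$. Starting from $P\in\T^\vee$ I would iteratively peel off $\T$-cocone layers, producing $P\to T_0\to Y_1\dashrightarrow,\ Y_1\to T_1\to Y_2\dashrightarrow,\ \ldots,\ Y_{n-1}\to T_{n-1}\to Y_n\dashrightarrow$ with $T_i\in\T$ and $Y_i\in\T^\vee$. The essential step is to show $Y_n\in\T$. Applying $\Hom_\E(T,-)$ for $T\in\T$ to each triangle and using $\EE_\E^i(\T,\T)=0$ yields the dimension-shift chain
\[
\EE_\E^i(T,Y_n)\;\simeq\;\EE_\E^{i+1}(T,Y_{n-1})\;\simeq\;\cdots\;\simeq\;\EE_\E^{i+n}(T,P)
\]
for $i\geq 1$, and the right-hand side vanishes because $\pd_\E T\leq n$. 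Hence $Y_n\in\T^\bot$. Combined with $Y_n\in\T^\vee$ and the identity $\T^\vee\cap\T^\bot=\T$ (for $X$ in the intersection, applying $\Hom_\E(-,X)$ to a defining triangle $X\to T\to Y\dashrightarrow$ of $X\in\T^\vee$ and invoking $\EE_\E(\T^\vee,\T^\bot)=0$ makes $\mathrm{id}_X$ factor through $T\in\T$, exhibiting $X$ as a summand of $T$), this forces $Y_n\in\T$ and completes (P3).

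The step I expect to be the main obstacle is the construction of the approximation triangles in (a)$\Rightarrow$(b). In a general extriangulated category the classical "cocone of a right $\T$-approximation" trick breaks down because the approximation need not be a deflation; working around this by summing in a projective cover and iterating (ET4) across $n$ stages to manage all the intermediate cones is the technically heaviest piece of bookkeeping, even though each individual step is routine.
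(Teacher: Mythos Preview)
The paper does not prove this proposition; it is quoted from \cite[Proposition 3.7]{LZZZ} without argument, so there is no in-paper proof to compare your attempt against. On its own merits, your (b)$\Rightarrow$(c) is correct: the dimension shift $\EE^i_\E(\T,Y_n)\simeq\EE^{i+n}_\E(\T,P)$ goes through because every middle term $T_j$ lies in $\T$, and your splitting argument for $\T^\vee\cap\T^\bot=\T$ is fine.

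For (a)$\Rightarrow$(b) the strategy is reasonable, but the key step---placing the iterated cocone $K_n$ in $\T^\bot$ by dimension shifting---does not work as written. Your middle terms are $T_j\oplus P_j$, and while $\EE^i_\E(\T,T_j)=0$ for $i\ge 1$, nothing forces $\EE^i_\E(\T,P_j)=0$ for $1\le i\le n$: projective objects need not lie in $\T^\bot$. Consequently the long exact sequences attached to $K_{j+1}\to T_j\oplus P_j\to K_j$ do not collapse to isomorphisms $\EE^i_\E(\T,K_{j+1})\simeq\EE^{i+1}_\E(\T,K_j)$; the approximation property only makes $\Hom_\E(\T,T_j\oplus P_j)\to\Hom_\E(\T,K_j)$ surjective, which controls a single boundary map and not the higher ones. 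Repairing this takes a genuine extra argument that your sketch omits---for instance, producing at each stage a comparison triangle $\Omega^j_\E B\to K_j\to R_j$ with $R_j\in\T^\vee$, or replacing the right $\T$-approximation by a right $\T^\vee$-approximation and invoking a Wakamatsu-type lemma once (using $\Omega^i_\E\T\subseteq\T^\vee$, hence $(\T^\vee)^{\bot_1}=\T^\bot$). The construction of the second triangle ``seeded by feeding a projective cover of $B$ into (P3)'' is likewise only gestured at.
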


\begin{defn}
We call a hereditary cotorsion pair $(\U,\V)$ a \emph{tilting cotorsion pair} if the following conditions are satisfied:
\begin{itemize}
\item[(1)] $\T=\U\cap \V$ is a generalized tilting subcategory.
\item[(2)] $\U=\T^{\vee}$ and $\V=\T^{\bot}$.
\end{itemize}
Dually we can define \emph{cotilting cotorsion pair.}
\end{defn}


\begin{prop}\label{key3}
Let $(\U,\V)$ be a hereditary cotorsion pair. Then $(\U,\V)$ is a tilting cotorsion pair if and only if If $\pd_\E\U<\infty$.
\end{prop}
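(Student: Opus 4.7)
The plan is to prove both directions of the equivalence. For the ``only if'' direction, suppose $(\U,\V)$ is a tilting cotorsion pair with $\T=\U\cap\V$ an $n$-tilting subcategory, so $\pd_\E\T\leq n$ and $\U=\T^{\vee}$. I bound $\pd_\E X$ for $X\in\T^{\vee}_k$ by induction on $k$: the base case $k=0$ is trivial, and for $X\in\T^{\vee}_k$ sitting in an $\EE_\E$-triangle $X\to T\to X'\dashrightarrow$ with $T\in\T$ and $X'\in\T^{\vee}_{k-1}$, the long exact sequence of $\EE^i_\E(-,Y)$ together with $\pd_\E T\leq n$ and the inductive bound $\pd_\E X'\leq n$ forces $\pd_\E X\leq n$. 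Hence $\pd_\E\U\leq n<\infty$.

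For the ``if'' direction, let $n=\pd_\E\U$ and $\T=\U\cap\V$. The argument proceeds in two stages. Stage one verifies that $\T$ is an $n$-tilting subcategory: (P1) and (P2) of Definition~\ref{deftil} are immediate from $\T\subseteq\U\cap\V$ combined with the hereditary hypothesis. For (P3), starting from $R_0=P$ projective, I iterate the right-approximation part of the cotorsion pair to produce $\EE_\E$-triangles $R_i\to V^{R_i}\to U^{R_i}\dashrightarrow$; since both $R_i$ and $U^{R_i}$ lie in $\U$ and $\U$ is closed under extensions, $V^{R_i}\in\U\cap\V=\T$, so I set $T_i=V^{R_i}$ and $R_{i+1}=U^{R_i}$. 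The key point is to show $R_n\in\V$, so that $R_n\in\T$ and the coresolution terminates. For any $U\in\U$, iterated dimension-shifting along these triangles (with hereditariness killing each $\EE^j_\E(U,T_i)$) yields
\[
\EE^1_\E(U,R_n)\cong\EE^2_\E(U,R_{n-1})\cong\cdots\cong\EE^{n+1}_\E(U,P),
\]
which vanishes since $\pd_\E U\leq n$.

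Stage two identifies $\U$ and $\V$ in terms of $\T$. The inclusion $\T^{\vee}\subseteq\U$ is a straightforward induction on the filtration index using $\CoCone(\U,\U)=\U$ for hereditary cotorsion pairs. For the reverse inclusion $\U\subseteq\T^{\vee}$, I run the same iterative construction as in (P3) but starting from an arbitrary $X\in\U$; the identical shifting argument forces the $n$-th term into $\V$, so it lies in $\T$ and $X\in\T^{\vee}_n$. For the other identity, Lemma~\ref{lem1}(4) gives $\T^{\bot}=(\T^{\vee})^{\bot}=\U^{\bot}\subseteq\U^{\bot_1}=\V$, while $\V\subseteq\T^{\bot}$ follows from hereditariness applied to $\T\subseteq\U$.

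The main obstacle is the dimension-shifting computation in stage one, since this is the one place where the hereditary hypothesis and the finite-dimension hypothesis genuinely interact: hereditariness is needed to make the chain of isomorphisms go through by erasing each $\EE^j_\E(U,T_i)$, and the bound $\pd_\E\U\leq n$ is precisely what kills the final $\EE^{n+1}_\E(U,P)$. Everything else reduces to routine manipulations with long exact sequences and the definitions.
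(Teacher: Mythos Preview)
Your proof is correct and follows essentially the same route as the paper: both directions hinge on the same dimension-shifting argument showing that, starting from any $X\in\U$, iterated right approximations by the cotorsion pair produce a $\T$-coresolution whose $n$-th term lands in $\V$ (hence in $\T$), yielding $\U=\T^{\vee}_n$; and both identify $\V=\T^{\bot}$ via $(\T^{\vee})^{\bot}=\T^{\bot}$. The only organisational difference is that you verify (P1)--(P3) for $\T$ explicitly in a separate Stage~1, whereas the paper obtains the tilting property of $\T$ as a by-product of $\U=\T^{\vee}$ together with Proposition~\ref{main3}; your Stage~1 is in fact the special case $X=P$ of your Stage~2 argument, so it could be absorbed.
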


\begin{proof}
Let $\T=\U\cap \V$. Assume that $\pd_\E\U\leq n$, then $\EE^i_\E(\U,\U)=0, \forall i>n$. For any object $U\in \U$, we have $\EE_\E$-triangles $U_{i-1}\to T_i\to U_i,i=1,...,n+1$, $T_i\in \T,U_i\in \U, U_0=U$. By applying $\Hom_{\E}(\U,-)$ to these $\EE_\E$-triangles, we can get the following exact sequences
$$0=\EE^{i}_\E(\U,T_{n-i+2})\to \EE^{i}_\E(\U,U_{n-i+2})\to \EE^{i+1}_\E(\U,U_{n-i+1})\to \EE^{i+1}_\E(\U,T_{n-i+2})=0$$
with $i=1,....,n.$ Hence we have $\EE^{i}_\E(\U,U_{n-i+2})\simeq \EE^{i+1}_\E(\U,U_{n-i+1}), i=1,....,n$. But $\EE^{n+1}_\E(\U,U_1)=0$, we get that $\EE_\E(\U,U_{n+1})=0$.
Hence $U_{n+1}\in \T$. This implies $\U\subseteq \T^{\vee}_n$. Since $(\U,\V)$ is a hereditary cotorsion pair, we have $\T^{\vee}\subseteq \U$. This shows that $\U=\T^{\vee}$. We have $(\T^{\vee})^{\bot_1}\supseteq (\T^{\vee})^{\bot}=\T^{\bot}$. On the other hand, $\Omega^i_\E\T\subseteq \T^{\vee}, \forall i>0$, then $X\in (\T^{\vee})^{\bot_1}$ implies that $\EE_\E(\Omega^i\T,X)=\EE^i_\E(\T,X)=0, \forall i>0$. Hence $\V=\T^{\bot}$.

Now let $(\U,\V)$ be a tilting cotorsion pair. Then $\U=\T^{\vee}$. Let $\pd_\E \T\leq n$. We can easily get that for each object $U\in \T^{\vee}$, $\pd_\E U\leq n$. Hence $\pd_\E\U\leq n$.
\end{proof}

From the proof this proposition, we can easily get the following corollary.

\begin{cor}\label{key3cor}
Let $(\U,\V)$ be a hereditary cotorsion pair and $\T=\U\cap \V$. If $\pd_\E \U\leq n$, then $\U=\T^{\vee}_n$.
\end{cor}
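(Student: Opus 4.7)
The plan is to refine the argument carried out inside the proof of Proposition \ref{key3}, tracking the number of extension steps carefully so as to land in $\T^{\vee}_n$ rather than merely in $\T^{\vee}$. Fix $U\in\U$ and set $U_0=U$. Since $(\U,\V)$ is a cotorsion pair, there is an $\EE_\E$-triangle $U_0\to V_1\to U_1\dashrightarrow$ with $V_1\in\V$ and $U_1\in\U$; because $\U$ is closed under extensions and both $U_0,U_1$ lie in $\U$, one has $V_1\in\U\cap\V=\T$. Iterating this step yields a tower of $\EE_\E$-triangles
\[
U_{i-1}\to T_i\to U_i\dashrightarrow\quad (i=1,\dots,n),
\]
with $T_i\in\T$ and $U_i\in\U$.

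Next, I would apply $\Hom_\E(\U,-)$ to each of these triangles. Hereditariness of $(\U,\V)$ together with $T_i\in\T\subseteq\V$ forces $\EE^k_\E(\U,T_i)=0$ for every $k\geq 1$, so the long exact sequences collapse to dimension-shifting isomorphisms $\EE^k_\E(\U,U_i)\simeq\EE^{k+1}_\E(\U,U_{i-1})$. Chaining $n$ of these produces
\[
\EE_\E(\U,U_n)\simeq\EE^2_\E(\U,U_{n-1})\simeq\cdots\simeq\EE^{n+1}_\E(\U,U),
\]
and the right-hand group vanishes by the hypothesis $\pd_\E\U\leq n$ applied to $U\in\U$. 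Thus $U_n\in\U^{\bot_1}=\V$, which combined with $U_n\in\U$ gives $U_n\in\T$. Reading the $n$ triangles from bottom to top then exhibits $U=U_0\in\T^{\vee}_n$ by the inductive definition of $\T^{\vee}_n$, so $\U\subseteq\T^{\vee}_n$. The reverse inclusion is free from Proposition \ref{key3}, which already identifies $\U=\T^{\vee}$, together with the obvious $\T^{\vee}_n\subseteq\T^{\vee}$.

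The only delicate point, and the main (mild) obstacle, is the index bookkeeping: one has to check that exactly $n$ applications of the shifting isomorphism are needed and not $n+1$ as in the looser count used in the proof of Proposition \ref{key3}. This is tuned precisely so that the final index lands on $\EE^{n+1}_\E(\U,U)$, which is exactly where the hypothesis $\pd_\E\U\leq n$ kicks in; no deeper obstacle is anticipated.
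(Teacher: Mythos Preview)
Your argument is correct and is precisely the approach the paper intends: the corollary is stated immediately after Proposition~\ref{key3} with the remark that it follows ``from the proof of this proposition,'' and your write-up simply re-runs that dimension-shifting argument with the sharper count of $n$ triangles rather than $n+1$. The tighter bookkeeping you flag is exactly what is needed to conclude $U_n\in\T$ and hence $U\in\T^{\vee}_n$; nothing else is required.
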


\medskip

\section{Gluing $n$-tilting and $n$-cotilting subcategories}

\begin{defn}
$\E$ is said to have finite projective global dimension if there is a natural number $n$ such that $\E=(\mathcal P_\E)^{\wedge}_n$. The minimal $n$ that satisfies this condition is called the global dimension of $\E$. In this case, we write $\pgl \E=n$.

Dually we can define the injective global dimension $\igl\E$ of $\E$.
\end{defn}

In this section, we always assume the following:
\begin{itemize}
\item[(a)] $(\mathcal A,\B,\C)$ is a recollement of extriangulated categories.
\item[(b)] $\B$ has enough projectives and enough injectives.
\item[(c)] $i^!,j_!$ are exact.
\item[(d)] $(\U_1,\V_1)$ and $(\U_3,\V_3)$ are hereditary pairs in $\mathcal A$ and $\C$ respectively. $(\U_2,\V_2)$ is the glued cotorsion pair in $\B$. Denote $\U_i\cap\V_i$ by $\T_i, i=1,2,3$.
\end{itemize}

The following corollary is a direct conclusion of Proposition \ref{key3}.

\begin{cor}
 If $\pgl \B<\infty$ (resp. $\igl \B<\infty$), then $(\U_2,\V_2)$ is a tilting (resp. cotilting) cotorsion pair.
\end{cor}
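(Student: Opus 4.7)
The proof is a direct application of Proposition \ref{key3} once heredity of $(\U_2,\V_2)$ is in hand, so the plan has only two lightweight steps.

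First I would invoke Proposition \ref{p1} to conclude that the glued cotorsion pair $(\U_2,\V_2)$ is hereditary. This is legitimate because the standing hypotheses (a)--(d) of this section supply precisely what Proposition \ref{p1} requires: $\B$ has enough projectives and enough injectives, the functors $i^{!}$ and $j_{!}$ are exact, and both $(\U_1,\V_1)$ and $(\U_3,\V_3)$ are hereditary by assumption (d).

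Second, for the tilting assertion, assume $\pgl \B = n < \infty$. Then every object of $\B$ lies in $(\mathcal P_\B)^{\wedge}_n$, so in particular $\U_2 \subseteq \B$ satisfies $\pd_\B \U_2 \leq n < \infty$. Feeding this into the ``if'' direction of Proposition \ref{key3} yields that $(\U_2,\V_2)$ is a tilting cotorsion pair. The cotilting case is formally dual: assuming $\igl \B < \infty$ gives $\id_\B \V_2 < \infty$, and the dual of Proposition \ref{key3}---obtained by swapping projectives with injectives, $\Omega_\B$ with $\Sigma_\B$, and $(-)^{\vee}$ with $(-)^{\wedge}$ throughout its proof---yields that $(\U_2,\V_2)$ is a cotilting cotorsion pair.

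There is essentially no obstacle; all substantive content has already been absorbed into Propositions \ref{p1} and \ref{key3}. The only minor care I would take is to verify that the dual of Proposition \ref{key3} holds in this extriangulated setting, but since the proof of \ref{key3} only uses the symmetric machinery of $\EE_\E^i$ together with the isomorphism $\EE_\E(\Omega^i_\E X, Y) \simeq \EE_\E(X, \Sigma^i_\E Y)$ recalled before Definition \ref{def1}, dualization is automatic and requires no extra hypothesis beyond those already imposed.
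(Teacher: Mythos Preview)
Your proposal is correct and matches the paper's own treatment: the paper simply states that the corollary is a direct conclusion of Proposition~\ref{key3}, and you have spelled out exactly those details (heredity from Proposition~\ref{p1}, then the finiteness hypothesis feeds into Proposition~\ref{key3} or its dual).
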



\subsection{Gluing $n$-tilting subcategories}

\begin{prop}\label{main6}
Let $(\U_1,\V_1)$ and $(\U_3,\V_3)$ be tilting cotorsion pairs. Assume $\pgl \mathcal A<\infty$, then $(\U_2,\V_2)$ is a tilting cotorsion pair.
\end{prop}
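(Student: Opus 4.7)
The plan is to apply Proposition \ref{key3}, which characterizes tilting cotorsion pairs among hereditary ones via finiteness of $\pd_\B \U_2$. Proposition \ref{p1} already delivers heredity of $(\U_2,\V_2)$ from that of $(\U_1,\V_1)$ and $(\U_3,\V_3)$, so the only substantive task is to produce a uniform upper bound on $\pd_\B U$ for $U \in \U_2$. Because $\U_2 \subseteq \add \widetilde{\U_2}$ by Proposition \ref{key0} and projective dimension is stable under direct sums and direct summands, it is enough to bound $\pd_\B B$ for $B \in \widetilde{\U_2}$.

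Next I would record the following transfer estimates for projective dimension. Since $i_\ast$ is exact (by the recollement axioms) and preserves projectives (Proposition \ref{key1}(4), using exactness of $i^!$), applying $i_\ast$ to a finite projective resolution in $\mathcal A$ yields one in $\B$; hence $\pd_\B i_\ast A \leq \pd_{\mathcal A} A$ for every $A \in \mathcal A$. The same reasoning for $j_!$, which is exact by hypothesis and preserves projectives by Proposition \ref{key1}(3'), gives $\pd_\B j_! C \leq \pd_\C C$ for every $C \in \C$. Proposition \ref{key3} applied to the two input tilting cotorsion pairs provides finite numbers $n_1 := \pd_{\mathcal A} \U_1$ and $n_3 := \pd_\C \U_3$, so for $B \in \widetilde{\U_2}$ we immediately get $\pd_\B i_\ast i^\ast B \leq n_1$ and $\pd_\B j_! j^\ast B \leq n_3$.

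Now I would invoke axiom (R5) to obtain a right exact $\EE_\B$-triangle sequence
$$i_\ast A' \to j_! j^\ast B \to B \to i_\ast i^\ast B$$
with $A' \in \mathcal A$, and splice it into two $\EE_\B$-triangles $i_\ast A' \to j_! j^\ast B \to Y \dashrightarrow$ and $Y \to B \to i_\ast i^\ast B \dashrightarrow$ through an intermediate object $Y$. Setting $m := \pgl \mathcal A < \infty$, we obtain $\pd_\B i_\ast A' \leq m$ no matter which $A'$ appears. The long exact sequence of $\EE_\B^k(-,W)$ applied to the first triangle then gives $\pd_\B Y \leq \max(m+1,\, n_3)$, and applied to the second triangle gives $\pd_\B B \leq \max(m+1,\, n_1,\, n_3)$, which is finite and independent of $B$. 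Combined with heredity from Proposition \ref{p1}, Proposition \ref{key3} concludes that $(\U_2,\V_2)$ is a tilting cotorsion pair.

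The principal obstacle will be making the splicing of the right exact $\EE_\B$-triangle sequence in (R5) into two genuine $\EE_\B$-triangles rigorous within the extriangulated framework of Wang--Wei--Zhang. This is also precisely where the hypothesis $\pgl \mathcal A < \infty$ is genuinely used: the auxiliary object $A'$ produced by (R5) need not lie in $\U_1$, so the bound $\pd_\B i_\ast A' \leq m$ cannot come from the tilting hypothesis on $(\U_1,\V_1)$ alone and must be controlled globally on $\mathcal A$. Once the splicing is in hand the three projective-dimension bounds combine cleanly through the long exact Ext sequences to yield the required finite bound.
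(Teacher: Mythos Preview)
Your proposal is correct and follows essentially the same route as the paper: reduce via Propositions \ref{p1} and \ref{key3} to bounding $\pd_\B\U_2$, pass to $\widetilde{\U_2}$ using Proposition \ref{key0}, splice the (R5) sequence into two $\EE_\B$-triangles, and bound each term using that $i_\ast$ and $j_!$ are exact and preserve projectives. The only cosmetic difference is bookkeeping: the paper sets $n_1:=\pgl\mathcal A$ (rather than $\pd_{\mathcal A}\U_1$) and so obtains the bound $\max\{n_1+1,n_3\}$, which coincides with your $\max\{m+1,n_1,n_3\}$ since $\pd_{\mathcal A}\U_1\leq\pgl\mathcal A$; and the splicing you flag as a potential obstacle is in fact immediate from the definition of a right exact $\EE_\B$-triangle sequence in \cite{WWZ}.
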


\begin{proof}
By Proposition \ref{key3} and Proposition \ref{p1}, it is enough to check that $\pd_\B \U_2<\infty$. Assume $\pgl \mathcal A\leq n_1$ and $\pd_\C\U_3\leq n_3$.

Let $U\in \U_2$ be any object. By Proposition \ref{key0}, there exists an object $U'\in \add \widetilde{\U_2}$ such that $\widetilde{U}=:U'\oplus U$ satisfies $i^*\widetilde{U}\in \U_1$ and $j^*\widetilde{U}\in \U_3$. There exists a right exact $\mathbb{E}_{\mathcal B}$-triangle sequence
 $$
  i_\ast A\rightarrow j_! j^\ast \widetilde{U}\rightarrow \widetilde{U}\rightarrow i_\ast i^\ast \widetilde{U}
$$
 with $A\in \mathcal{A}$, which is, in fact, a combination of two $\EE_\B$-triangles
$$i_\ast A\rightarrow j_! j^\ast \widetilde{U}\rightarrow X\dashrightarrow, \quad X\rightarrow \widetilde{U}\rightarrow i_\ast i^\ast \widetilde{U}\dashrightarrow.$$
Since $\pgl \mathcal A\leq n_1$, we have $\pd_\mathcal A A\leq n_1$. Since $i_*$ preserves projectives, we have $\pd_\B i_*A\leq n_1$. Since $j^*\widetilde{U}\in \U_3$, we have $\pd_\C j^*\widetilde{U}\leq n_3$. Since $j_!$ preserves projectives, we have $\pd_\B (j_!j^*\widetilde{U})\leq n_3$. Hence $\pd_\B X\leq \max\{n_1+1,n_3\}$. Since $i^*\widetilde{U}\in \U_1$, we have $\pd_\mathcal A i^*\widetilde{U}\leq n_1$, then $\pd_\B (i_*i^*\widetilde{U})\leq n_1$. Hence $\pd_\B \widetilde{U}\leq \max\{n_1+1,n_3\}$. This implies $\pd_\B U\leq \max\{n_1+1,n_3\}$. Thus $\pd_\B\U_2\leq \max\{n_1+1,n_3\}$ and by Proposition \ref{key3}, $(\U_2,\V_2)$ is a tilting cotorsion pair.
\end{proof}

When we glue tilting objects on abelian categories, we can drop the assumption of projective dimension finiteness. We need some preparation.

First note that $i_*\T_1\subseteq \T_2$.

For any object $T''\in \T_3$, we have $j_!T''\in \U_2$. It admits an $\EE_\B$-triangle
$$i_*i^!j_!T''\to j_!T''\to j_*T''\dashrightarrow.$$
Since $i^!j_!T''\in \mathcal A$, it admits an $\EE_{\mathcal A}$-triangle
$$i^!j_!T''\to V_1\to U_1\dashrightarrow$$
where $U_1\in \U_1$ and $V_1\in \V_1$. Then we have an $\EE_\B$-triangle
$$i_*i^!j_!T''\to i_*V_1\to i_*U_1\dashrightarrow$$
where $i_*U_1\in \U_2$ and $i_*V_1\in \V_2$. Now we have the following commutative diagram of $\EE_\B$-triangles.
$$\xymatrix{
i_*i^!j_!T''\ar[r] \ar[d] &j_!T''\ar[r]  \ar[d] &j_*T'' \ar@{=}[d] \ar@{-->}[r] & \ar@{}[d]^{(\bigstar)}\\
i_*V_1 \ar[r] \ar[d] &K_{T''} \ar[d] \ar[r] &j_*T''  \ar@{-->}[r] &\\
i_*U_1 \ar@{=}[r] \ar@{-->}[d] &i_*U_1 \ar@{-->}[d]\\
&&
}
$$
Since $j_!T''\in \U_2$, $j_*T''\in \V_2$, we have $K_{T''}\in \T_2$.

Assume $\T_3=\add T_3$ such that $T_3=\bigoplus\limits_{i=1}\limits^n T_3^i$, where $T^i_3$ are indecomposable objects. Let $K_{T_3^i}$ be the object got in the above diagram with respect to $T_3^i$. Denote $\bigoplus\limits_{i=1}\limits^n K_{T_3^i}$ by $K_{T_3}$.

\begin{thm}\label{main7}
Let $(\mathcal A,\B,\C)$ be a recollement of abelian categories. Let $(\U_1,\V_1)$ and $(\U_3,\V_3)$ be tilting cotorsion pairs. Assume $\T_i=\add T_i,i=1,3$, then $(\U_2,\V_2)$ is a tilting cotorsion pair such that $\T_2=\add (i_*T_1\oplus K_{T_3})$.
\end{thm}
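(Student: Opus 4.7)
The plan is to first establish the explicit description $\T_2=\add(i_*T_1\oplus K_{T_3})$, and then verify that $(\U_2,\V_2)$ is a tilting cotorsion pair by showing $\pd_\B\U_2<\infty$ and applying Proposition \ref{key3}. First, I would verify $i_*T_1\subseteq\T_2$ by direct computation: the isomorphisms $i^*i_*T_1\cong T_1\in\U_1$, $i^!i_*T_1\cong T_1\in\V_1$, and $j^*i_*T_1=0$ place $i_*T_1$ in $\widetilde{\U_2}$ and $\V_2$, while Proposition \ref{key1}(7) yields $\EE_\B(i_*T_1,\V_2)\cong\EE_\mathcal A(T_1,i^!\V_2)\subseteq\EE_\mathcal A(T_1,\V_1)=0$, so $i_*T_1\in\U_2$ too. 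The inclusion $K_{T_3}\subseteq\T_2$ is already recorded in the setup preceding the theorem.

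For the reverse inclusion $\T_2\subseteq\add(i_*T_1\oplus K_{T_3})$, I would take an indecomposable $T\in\T_2$ and split on $j^*T$. If $j^*T=0$, then $T\in\Im i_*$ by (R2), so $T\cong i_*T'$ with $T'$ indecomposable by full faithfulness of $i_*$; since $T'\cong i^*T\in\U_1$ and $T'\cong i^!T\in\V_1$, one has $T'\in\T_1=\add T_1$ and $T$ is a summand of $i_*T_1$. If $j^*T\neq 0$, then $j^*T\in\T_3=\add T_3$; forming $K_{j^*T}$ produces two short exact sequences in $\B$,
\begin{equation*}
0\to i_*i^!T\to T\to j_*j^*T\to 0 \quad\text{and}\quad 0\to i_*V_1\to K_{j^*T}\to j_*j^*T\to 0,
\end{equation*}
the first coming from Proposition \ref{key2}(1) and the second by the construction of $K_{j^*T}$. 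The pullback $P:=T\times_{j_*j^*T}K_{j^*T}$ then yields short exact sequences $0\to i_*V_1\to P\to T\to 0$ and $0\to i_*i^!T\to P\to K_{j^*T}\to 0$, both of which split because the cotorsion pair $(\U_2,\V_2)$ kills $\EE_\B(T,i_*V_1)$ (as $T\in\U_2$ and $i_*V_1\in\V_2$) and $\EE_\B(K_{j^*T},i_*i^!T)$ (as $K_{j^*T}\in\U_2$ and $i_*i^!T\in\V_2$). Hence $T\oplus i_*V_1\cong K_{j^*T}\oplus i_*i^!T$, and Krull-Schmidt forces the indecomposable $T$ to be a summand of $K_{j^*T}\in\add K_{T_3}$, since being a summand of $i_*i^!T\in\Im i_*$ would contradict $j^*T\neq 0$.

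To conclude that $(\U_2,\V_2)$ is tilting, I would show $\pd_\B\U_2<\infty$. For any $U\in\U_2$, which lies in $\widetilde{\U_2}$, axiom (R5) furnishes the four-term exact sequence $0\to i_*A\to j_!j^*U\to U\to i_*i^*U\to 0$ in abelian $\B$ with $A\in\mathcal A$. Splitting this into two short exact sequences and applying $\Hom_\B(-,V)$ for $V\in\V_2$, together with the higher-Ext versions of the adjunction isomorphisms Proposition \ref{key1}(7)(7') (valid since $i_*$ and $j_!$ are exact and preserve projectives), I would successively establish $\EE^{\geq 1}_\B(i_*i^*U,V)=\EE^{\geq 1}_\B(j_!j^*U,V)=0$ (using $i^*U\in\U_1$, $j^*U\in\U_3$, $i^!V\in\V_1$, $j^*V\in\V_3$ and the hereditary pairs in $\A,\C$), whence $\EE^{\geq 1}_\B(i_*A,V)=0$ and therefore $\EE^{\geq 1}_\mathcal A(A,i^!V)=0$. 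Since every $V_1\in\V_1$ equals $i^!(i_*V_1)$ with $i_*V_1\in\V_2$, this yields $\EE^{\geq 1}_\mathcal A(A,\V_1)=0$, so $A\in\U_1$ and $\pd_\mathcal A A\leq n_1$ (as $(\U_1,\V_1)$ is tilting, so $\pd\U_1=\pd\T_1\leq n_1$ by Proposition \ref{key3}). Combined with $\pd_\B j_!j^*U\leq n_3$ and $\pd_\B i_*i^*U\leq n_1$, the standard short-exact-sequence estimates deliver $\pd_\B U\leq\max(n_1+1,n_3)$, so $\pd_\B\U_2<\infty$ and Proposition \ref{key3} gives the tilting conclusion. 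The main obstacle will be the careful tracking of higher-Ext vanishing through the spliced sequences to obtain $A\in\U_1$; this is what replaces the hypothesis $\pgl\mathcal A<\infty$ used in Proposition \ref{main6}.
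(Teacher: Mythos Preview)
Your argument is correct and follows a genuinely different route from the paper's. The paper verifies that $\T_2':=\add(i_*T_1\oplus K_{T_3})$ is an $n$-tilting subcategory by checking (P1)--(P3) of Definition~\ref{deftil} (the bulk of the work being the explicit construction of $\T_2'$-coresolutions of indecomposable projectives, separately for $P\simeq i_*P_1$ and for $P$ a summand of $j_!P_3$), then invokes Proposition~\ref{main3} to produce the cotorsion pair $((\T_2')^\vee,(\T_2')^\bot)$, and finally identifies it with $(\U_2,\V_2)$ by showing $(\T_2')^\bot\subseteq\V_2$. You instead first pin down $\T_2=\T_2'$ via a pullback/Krull--Schmidt argument, and then show $\pd_\B\U_2<\infty$ directly by proving that the object $A$ in the (R5) sequence already lies in $\U_1$, so that Proposition~\ref{key3} applies. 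Your route avoids the lengthy (P3) verification and explains conceptually why the hypothesis $\pgl\A<\infty$ of Proposition~\ref{main6} can be dropped here: the auxiliary object $A$ is forced into $\U_1$ by Ext-vanishing against $\V_2$. The paper's route, on the other hand, produces the explicit inclusion $\mathcal P_\B\subseteq(\T_2')^\vee_{n_1+n_3+1}$ and the fact $j_!\U_3\subseteq(\T_2')^\vee$.

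Two steps you state without proof deserve a line of justification. First, your claim that every $U\in\U_2$ lies in $\widetilde{\U_2}$ (in particular $i^*U\in\U_1$) is not given by Proposition~\ref{key0}, which only yields $\U_2\subseteq\add\widetilde{\U_2}$, and the Remark following it explicitly ties $\widetilde{\U_2}=\U_2$ to exactness of $i^*$, which is not assumed. The claim is nevertheless true: given an extension $0\to V_1\to E\to i^*U\to 0$ in $\A$, apply $i_*$ and pull back along the unit $U\to i_*i^*U$; the resulting extension of $U$ by $i_*V_1\in\V_2$ splits since $U\in\U_2$, and the section transports through the adjunction to a splitting of the original sequence. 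Second, you assert $K_{j^*T}\in\add K_{T_3}$; this holds provided you choose, for $j^*T\cong\bigoplus(T_3^i)^{a_i}$, the direct sum of the diagrams $(\bigstar)$ used to define the $K_{T_3^i}$, so that $K_{j^*T}\cong\bigoplus(K_{T_3^i})^{a_i}$.
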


\begin{proof}
Denote $i_*T_1\oplus K_{T_3}=$ by $T_2'$ and $\add T_2'$ by $\T_2'$. Assume $\pd_\mathcal A \U_1\leq n_1$ and $\pd_\C\U_3\leq n_3$.

Since $T_2'\in \T_2$, we have $\Ext^i_{\B}(T_2',T_2')=0$. Definition \ref{deftil}{\rm (P2)} is satisfied.

Since $T_1\in \U_1$ and $\pd_\mathcal A \U_1\leq n_1$, we have $\pd_\B i_*T_1\leq n_1$. Since $T_3\in \U_3$ and $\pd_\mathcal C \U_3\leq n_3$, according to diagram $(\bigstar)$, we have $\pd_\B K_{T_3}\leq \max\{n_1,n_3\}$. Hence $\pd_\B T_2'\leq \max\{n_1,n_3\}$. Definition \ref{deftil}{\rm (P1)} is satisfied.

Let $P\in \mathcal P_\B$ be an indecomposable object. It admits two short exact sequences
$$i_\ast A\rightarrowtail j_! j^\ast P\twoheadrightarrow X, \quad X\rightarrowtail P\twoheadrightarrow i_\ast i^\ast P$$
with $A\in \mathcal A$. Since $i^*$ and $i_*$ preserve projectives, we have $i_\ast i^\ast P\in \mathcal P_\B$. Hence the second sequence splits. Since $P$ is indecomposable, we can get that $P\simeq i_*i^*P$ or $P\simeq X$. The second case implies that $P$ is a direct summand of $j_! j^\ast P$. Since $j^*$ preserves projectives, we get that any indecomposable object $P\in \mathcal P_\B$ satisfies one of the following conditions:
\begin{itemize}
\item[(1)] There is an object $P_1\in \mathcal P_\mathcal A$ such that $P\simeq i_*P_1$.
\item[(2)] There is an object $P_3\in \mathcal P_\C$ such that $P\simeq j_!P_3$.
\end{itemize}
If $P\simeq i_*P_1$, since $P_1\in \U_1=(\T_1)^{\vee}_{n_1}$, we have $P\in (i_*\T_1)^{\vee}_{n_1}\subseteq (\T_2')^{\vee}_{n_1}$.\\
If $P\simeq j_!P_3$, since $P_3\in \U_3=(\T_3)^{\vee}_{n_3}$, we have the following short exact sequences
$$P\rightarrowtail j_!T_3^0 \twoheadrightarrow j_!U_3^1,\quad j_!U_3^1 \rightarrowtail j_!T_3^1 \twoheadrightarrow j_!U_3^2,\quad \cdots, \quad j_!U_3^{n_3-1} \rightarrowtail j_!T_3^{n_3-1} \twoheadrightarrow j_!T_3^{n_3}$$
where $T_3^i\in \T_3$ and $U_3^j\in \U_3$. Since $j_!T_3^0$ admits a short exact sequence $j_!T_3^0 \rightarrowtail K_{T_3^0}\twoheadrightarrow i_*U_1^0$ where $K_{T_3^0}\in \add (K_{T_3})$ and $U_1^0\in \U_1$, we get the following commutative diagram.
$$\xymatrix{
P \ar@{ >->}[r] \ar@{=}[d] &j_!T_3^0 \ar@{ >->}[r] \ar@{->>}[d] &j_!U_3^1 \ar@{ >->}[d]\\
P \ar@{ >->}[r] &K_{T_3^0} \ar@{->>}[r] \ar@{->>}[d] &U_2^1 \ar@{->>}[d]\\
&i_*U_1^0 \ar@{=}[r] &i_*U_1^0
}
$$
Since $j_!U_3^1,i_*U_1^0 \in \U_2$, we have $U_2^1\in \U_2$. $j_!U_3^1$ admits the following commutative diagram
$$\xymatrix{
j_!U_3^1 \ar@{ >->}[r] \ar@{=}[d] &j_!T_3^1 \ar@{ >->}[r] \ar@{->>}[d] &j_!U_3^2 \ar@{ >->}[d]\\
j_!U_3^1 \ar@{ >->}[r] &K_{T_3^1} \ar@{->>}[r] \ar@{->>}[d] &U_2^2 \ar@{->>}[d]\\
&i_*U_1^1 \ar@{=}[r] &i_*U_1^1
}
$$
where $K_{T_3^1}\in \add (K_{T_3})$ and $U_1^1\in \U_1$. This implies $U_2^2\in \U_2$. Since $i_*U_1^0$ admits a short exact sequence $i_*U_1^0\rightarrowtail i_*T_1^1\twoheadrightarrow i_*(U_1^1)'$ where $T^1_1\in \T_1$ and $(U_1^1)'\in \U_1$, we have the following commutative diagram
$$\xymatrix{
j_!U_3^1 \ar@{ >->}[r] \ar@{ >->}[d] &K_{T_3^1} \ar@{->>}[r] \ar@{ >->}[d] &U_2^2 \ar@{ >->}[d]\\
U_2^1  \ar@{ >->}[r] \ar@{->>}[d] &K_{T_3^1}\oplus i_*T_1^1 \ar@{->>}[r] \ar@{->>}[d] & U_2^2\oplus i_*(U_1^1)' \ar@{->>}[d]\\
i_*U_1^0 \ar@{ >->}[r] &i_*T_1^1 \ar@{->>}[r] &i_*(U_1^1)'
}
$$
where $K_{T_3^1}\oplus i_*T_1^1\in \add T_2'$. Now we only need to focus on $U^2_2$. Since it admits a short exact sequence $j_!U_3^2 \rightarrowtail U_2^2\twoheadrightarrow i_*U_1^1$, we can continue this process and get the following exact sequences:
$$P\rightarrowtail \widetilde{T_2^0} \twoheadrightarrow U_2^1,\quad U_2^1 \rightarrowtail \widetilde{T_2^1} \twoheadrightarrow U_2^2\oplus i_*(U_1^1)'\quad \cdots, \quad U_2^{n_3-1} \rightarrowtail \widetilde{T_2^{n_3}} \twoheadrightarrow U_2^{n_3}\oplus i_*(U_1^{n_3-1})'$$
where $\widetilde{T_2^i}\in \T_2'$, $U_2^j\in \U_2$ and $(U_1^{k})'\in \U_1$. Moreover, $U_2^{n_3}$ admits a short exact sequence
$$j_!T_3^{n_3}\rightarrowtail U_2^{n_3}\twoheadrightarrow i_*U_1^{n_3-1}.$$
Then we have the following commutative diagram.
$$\xymatrix{
j_!T_3^{n_3} \ar@{ >->}[r] \ar@{ >->}[d] &K_{T_3^{n_3}} \ar@{->>}[r] \ar@{ >->}[d] &i_*U_1^{n_3} \ar@{ >->}[d]\\
U_2^{n_3}  \ar@{ >->}[r] \ar@{->>}[d] &K_{T_3^{n_3}}\oplus i_*T_1^{n_3} \ar@{->>}[r] \ar@{->>}[d] & i_*U_1^{n_3}\oplus i_*(U_1^{n_3})' \ar@{->>}[d]\\
i_*U_1^{n_3-1} \ar@{ >->}[r] &i_*T_1^{n_3} \ar@{->>}[r] &i_*(U_1^{n_3})'
}
$$
Hence we get that $P\in (\T_2')^{\vee}_{(n_1+n_3+1)}$,  Definition \ref{deftil}{\rm (P3)} is satisfied.

Note that the argument above also shows that $j_!\U_3\subseteq  (\T_2')^{\vee}$.

By Proposition \ref{main3}, $((\T_2')^{\vee}, (\T_2')^{\bot})$ is a cotorsion pair. Since $T_2'\in \T_2$, we have $(\T_2')^{\bot}\supseteq \V_2$. Let $X\in (\T_2')^{\bot}$. We show that $X\in \V_2$.

$\Ext^1_{\C}(\U_3,j^*X)\simeq \Ext^1_{\B}(j_!\U_3,X)$. Since $j_!\U_3\subseteq  (\T_2')^{\vee}$ and $X\in (\T_2')^{\bot}=((\T_2')^{\vee})^{\bot}$, we have $\Ext^1_\B(j_!\U_3,X)=0$. Hence $j^*X\in\V_3$.

$\Ext^1_{\mathcal A}(\U_1,i^!X)\simeq \Ext^1_{\B}(i_*\U_1,X)$. Since $i_*\U_1\subseteq  (\T_2')^{\vee}$, we have $\Ext^1_\B(i_*\U_1,X)=0$. Hence $i^!X\in\V_1$. This means $X\in \V_2$.

Thus $(\U_2,\V_2)=((\T_2')^{\vee}, (\T_2')^{\bot})$ is a tilting cotorsion pair and $\T_2'=\T_2$.
\end{proof}

\subsection{Gluing $n$-cotilting subcategories}

\begin{thm}\label{main5}
Let $(\U_1,\V_1)$ and $(\U_3,\V_3)$ be cotilting cotorsion pairs. Then $(\U_2,\V_2)$ is a cotilting cotorsion pair.
\end{thm}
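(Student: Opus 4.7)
The plan is to apply the dual of Proposition~\ref{key3}, which asserts that a hereditary cotorsion pair in an extriangulated category with enough projectives and injectives is cotilting precisely when the right-hand subcategory has finite injective dimension. Since Proposition~\ref{p1} already hands us that $(\U_2,\V_2)$ is hereditary, the entire content of the theorem reduces to producing a uniform bound on $\id_\B V$ as $V$ ranges over $\V_2$. Write $n_1:=\id_\mathcal A\V_1<\infty$ and $n_3:=\id_\C\V_3<\infty$; finiteness follows from the dual of Proposition~\ref{key3} applied in $\mathcal A$ and $\C$, both of which have enough injectives under the running hypotheses (Proposition~\ref{key1}(5)(6)).

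For $V\in\V_2$, Proposition~\ref{key2}(1)---valid because $i^!$ is exact---supplies the $\EE_\B$-triangle
\[
i_* i^! V\longrightarrow V\longrightarrow j_* j^* V\dashrightarrow,
\]
whose associated long exact sequence in $\EE_\B^*(-,\cdot)$ gives $\id_\B V\leq\max\{\id_\B i_* i^! V,\ \id_\B j_* j^* V\}$. I would then bound the two outer terms separately. The right-hand term is immediate: by Proposition~\ref{key1}(8'), $i^!$ exact forces $j_*$ to be exact; by Proposition~\ref{key1}(3'), $j_*$ always preserves injectives. Since $j^*V\in\V_3$ admits an injective coresolution of length $\leq n_3$ in $\C$, applying $j_*$ term-by-term produces one of the same length for $j_*j^*V$ in $\B$, whence $\id_\B j_*j^*V\leq n_3$.

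The left-hand term is the technical heart, and rests on the auxiliary bound $\id_\B i_*i^! I\leq 1$ for every $I\in\mathcal I_\B$. To establish it, I would invoke Proposition~\ref{key2}(1) once more for the triangle
\[
i_*i^! I\longrightarrow I\longrightarrow j_*j^* I\dashrightarrow.
\]
Here $I$ is injective by hypothesis, while $j_*j^* I$ is injective because $j_!$ exact forces $j^*$ to preserve injectives (Proposition~\ref{key1}(4')) and then $j_*$ does the same (Proposition~\ref{key1}(3')). This is already a length-one injective coresolution of $i_*i^! I$. Using Proposition~\ref{key1}(5), $i^! V\in\V_1$ admits an injective coresolution $0\to i^! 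V\to J^0\to\cdots\to J^{n_1}\to 0$ in $\mathcal A$ with each $J^k\in\add(i^!(\mathcal I_\B))$; applying the exact functor $i_*$ yields an exact sequence $0\to i_* i^! V\to i_* J^0\to\cdots\to i_* J^{n_1}\to 0$ in $\B$. Each $i_* J^k$ is a direct summand of some $i_*i^! I_k$ with $I_k\in\mathcal I_\B$, so $\id_\B i_* J^k\leq 1$ by the auxiliary claim. Splitting this coresolution into short $\EE_\B$-triangles $Z^j\to i_*J^j\to Z^{j+1}\dashrightarrow$ and iterating the elementary inequality $\id_\B A\leq\max\{\id_\B B,\id_\B C+1\}$ for a triangle $A\to B\to C\dashrightarrow$ yields $\id_\B i_* i^! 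V\leq n_1+1$. Combining gives $\id_\B V\leq\max\{n_1+1,n_3\}$ uniformly for $V\in\V_2$, so the dual of Proposition~\ref{key3} completes the proof.

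The main obstacle I anticipate is precisely the auxiliary bound $\id_\B i_*i^! I\leq 1$. Without the exactness of $i^*$ one cannot directly assert that $i_*$ preserves injectives, and the argument must detour through the triangle relating $i_*i^! I$, $I$, and $j_*j^* I$. It is the hypothesis that $j_!$ is exact---already essential for the existence of the glued cotorsion pair in Proposition~\ref{key0}---that forces $j_*j^* I$ to be injective and thereby supplies the needed injective coresolution of length one; everything downstream is a routine dimension-shifting calculation.
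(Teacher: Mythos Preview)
Your proof is correct and follows essentially the same approach as the paper's own argument: reduce via Proposition~\ref{p1} and the dual of Proposition~\ref{key3} to bounding $\id_\B\V_2$, split each $V\in\V_2$ with the $\EE_\B$-triangle $i_*i^!V\to V\to j_*j^*V\dashrightarrow$, bound the right term directly, and bound the left term by first establishing $\id_\B i_*i^!I\le 1$ for $I\in\mathcal I_\B$ and then pushing an $\add(i^!\mathcal I_\B)$-coresolution of $i^!V$ through $i_*$. Your write-up is in fact slightly more explicit than the paper's in citing which parts of Proposition~\ref{key1} are invoked at each step.
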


\begin{proof}
By Proposition \ref{p1} and the dual of Proposition \ref{key3}, it is enough to show that $\id_\B \V_2<\infty$. By the dual of Proposition \ref{key3}, we can assume that $\id_\mathcal A\V_1\leq n_1$ and $\id_\mathcal C\V_3\leq n_3$.

For any $I\in \mathcal I_\B$, since $i^!$ is exact, $I$ admits an $\EE_\B$-triangle
$$i_*i^!I\to I\to j_*j^*I\dashrightarrow.$$
Since $j_*,j^*$ preserves injectives, we have $j_*j^*I\in \mathcal I_\B$. This implies $\id_\B(i_*i^!I)\leq 1$.

For any $V\in \V_2$, we have an $\EE_\B$-triangle
$$i_*i^!V\to V\to j_*j^*V\dashrightarrow.$$
We have $j^*V\in \V_3$, hence $\id_\B(j_*j^*V)\leq n_3$. We also have $i^!V\in \V_1$, then $\id_\mathcal A i^!V\leq n_1$. By Proposition \ref{key1} (5), $\mathcal A$ has enough injectives $\add(i^!\mathcal I_\B)$, we have the following $\EE_\mathcal A$-triangles.
$$i^!V \to I_1^1 \to R_1\dashrightarrow,\quad R_1\to I_1^2\to R_2\dashrightarrow,\quad \cdots,\quad R_{n_1}\to I_1^{n_1}\to I_1^{n_1+1}\dashrightarrow$$
where $I_1^j\in \add(i^!\mathcal I_\B), j=1,2,...,n_1+1$. Then $i_*I_1^j$ is a direct summand of some object in $i_*i^!\mathcal I_\B$, which implies that $\id_\B(i_*I_1^j)\leq 1, j=1,2,...,n_1+1$. Since $i_*$ is exact, we can get that $\id_\B(i_*i^!V)\leq n_1+1$. Hence $\id_\B\V_2\leq\max\{n_1+1,n_3 \}$.
\end{proof}

\section{Examples}

In this section we give some examples of our results.

Let $\Lambda',\Lambda''$ be artin algebras and $_{\Lambda'}N_{\Lambda''}$ an $(\Lambda',\Lambda'')$-bimodule, and let $\left(\begin{smallmatrix}
                \Lambda'&N\\
                0&\Lambda''
              \end{smallmatrix}
            \right)$
be a triangular matrix algebra.
Then any module in $\mod \Lambda$ can be uniquely written as a triple ${X\choose Y}_{f}$ with $X\in\mod \Lambda'$, $Y\in\mod \Lambda'$
and $f\in\Hom_{\Lambda'}(N\otimes_{\Lambda''}Y,X)$, see \cite[page 76]{ARS}.

\begin{exm}\label{example}
Let $\Lambda'$ be the finite dimensional algebra given by the quiver $\xymatrix@C=15pt{1\ar[r]&2}$ and $\Lambda''$ be the finite dimensional algebra given by the quiver $\xymatrix@C=15pt{3\ar[r]^{\alpha}&4\ar[r]^{\beta}&5}$ with the relation $\beta\alpha=0$. Define a triangular matrix algebra $\Lambda={\Lambda'\ \Lambda'\choose \ 0\ \ \Lambda''}$, where the right $\Lambda''$-module structure on $\Lambda'$ is induced by the unique algebra surjective homomorphsim $\xymatrix@C=15pt{\Lambda''\ar[r]^{\phi}&\Lambda'}$ satisfying $\phi(e_{3})=e_{1}$, $\phi(e_{4})=e_{2}$, $\phi(e_{5})=0$.  Then $\Lambda$ is
a finite dimensional algebra given by the quiver
$$\xymatrix@C=0.5cm@R0.5cm{&\cdot\\
\cdot\ar[ru]^{\delta}&&\ar[lu]_{\gamma}\cdot\ar[rr]^-{\beta}&&\cdot\\
&\ar[lu]^{\epsilon}\cdot\ar[ru]_{\alpha}}$$
with the relation $\gamma\alpha=\delta\epsilon$ and $\beta\alpha=0$. The Auslander-Reiten quiver of $\Lambda$ is
$$\xymatrix@C=15pt{{0\choose P(5)}\ar[rd]&&{S(2)\choose S(4)}\ar[rd]&&{S(1)\choose 0}\ar[rd]&&0\choose P(3)\ar[rd]\\
&{S(2)\choose P(4)}\ar[ru]\ar[rd]&&P(1)\choose S(4)\ar[ru]\ar[r]\ar[rd]&P(1)\choose P(3)\ar[r]&S(1)\choose P(3)\ar[ru]\ar[rd]&&{0\choose S(3)}.\\
S(2)\choose 0\ar[ru]\ar[rd]&&P(1)\choose P(4)\ar[ru]\ar[rd]&&0\choose S(4)\ar[ru]&&S(1)\choose S(3)\ar[ru]\\
&P(1)\choose 0\ar[ru]&&0\choose P(4)\ar[ru]}$$

By \cite[Example 2.12]{P}, we have that
$$ \xymatrix{\mod \Lambda'\ar[rr]|{i_{*}}&&\ar@/_1pc/[ll]|{i^{*}}\ar@/^1pc/[ll]|{i^{!}}\mod \Lambda
\ar[rr]|{j^{\ast}}&&\ar@/_1pc/[ll]|{j_{!}}\ar@/^1pc/[ll]|{j_{\ast}}\mod \Lambda''}$$
is a recollement of module categories, where
\begin{align*}
&i^{*}({X\choose Y}_{f})=\Coker f, & i_{*}(X)={X\choose 0},&&i^{!}({X\choose Y}_{f})=X,\\
&j_{!}(Y)={N\otimes_{\Lambda''} Y\choose Y}_{1}, & j^{*}({X\choose Y}_{f})=Y, &&j_{*}(Y)={0\choose Y}.
\end{align*}
\medskip

\begin{itemize}
\item[\rm (1)] Let $T_1=P(1)\oplus S(1)\in \mod \Lambda'$ and $T_3=P(3)\oplus P(4)\oplus P(5)\in \mod \Lambda''$. Then $T_1$ is a cotilting $ \Lambda'$-module and $T_3$ is a $2$-cotilting $\Lambda''$-module. We have two cotorsion pairs:
    \begin{align*}
(\U_1,\V_1)=(\mod \Lambda',\add T_1),\\
(\U_3,\V_3)=(\add T_3,\mod \Lambda'').
\end{align*}
Note that $T_1,T_3$ are just $T'$ and $T''$ given in \cite[Example 4.1(1)]{MZ} respectively. They are also tilting modules, and the tilting cotorsion pairs they induce are still $(\U_1,\V_1)$ and $(\U_3,\V_3)$ respectively. By Theorem \ref{main5} we have a cotilting $\Lambda$-module
$$T={0\choose P(5)}\oplus{S(1)\choose 0}\oplus {P(1)\choose P(3)}\oplus{P(1)\choose P(4)}\oplus {P(1)\choose 0} ,$$
which is different from the tilting $\Lambda$-module
$${0\choose P(5)}\oplus{S(2)\choose P(4)}\oplus {P(1)\choose P(3)}\oplus{P(1)\choose P(4)}\oplus {P(1)\choose 0}$$
got in \cite[Example 4.1(1)]{MZ}.
\medskip

\item[\rm (2)] Let $T_1=P(1)\oplus S(2)\in \mod \Lambda'$ and $T_3=P(3)\oplus P(4)\oplus S(3)\in \mod \Lambda''$. Then $T_1$ is a tilting $\Lambda'$-module and $T_3$ is a $2$-tilting $\Lambda''$-module. We have two cotorsion pairs:
    \begin{align*}
(\U_1,\V_1)=(\add T_1,\mod \Lambda'),\\
(\U_3,\V_3)=(\mod \Lambda'',\add T_3).
\end{align*}
Then we have a $2$-tilting $\Lambda$-module
$$T={S(2)\choose 0}\oplus{S(2)\choose P(4)}\oplus {P(1)\choose 0}\oplus{P(1)\choose P(3)}\oplus {S(1)\choose S(3)}$$
by gluing $T_1$ and $T_3$. We also have
$$i_*(P(1))={P(1)\choose 0},i_*(S(2))={S(2)\choose 0}, j_!(P(3))={P(1)\choose P(3)},j_!(P(4))={S(2)\choose P(4)}, j_!(S(3))={S(1)\choose S(3)}.$$
\end{itemize}

\end{exm}

\bigskip
\bigskip

\section*{Acknowledgments}
The authors would like to thank Tiwei Zhao for the helpful discussions.

\vspace{1cm}

\end{document}